\newtheorem{theorem}{Theorem}[section]
\newtheorem{lemma}[theorem]{Lemma}
\theoremstyle{definition}
\newtheorem{definition}{Definition}
\theoremstyle{remark}
\newtheorem*{remark}{Remark}
\theoremstyle{definition}
\newtheorem{proposition}[theorem]{Proposition}
\newtheorem{corollary}[theorem]{Corollary}
\title{Fast nonasymptotic testing and support recovery for large sparse Toeplitz covariance matrices}
\author[1, *]{Nayel Bettache}
\author[1]{Cristina Butucea}
\author[1]{Marianne Sorba }
\affil[1]{CREST, ENSAE Paris, 5 Avenue Le Chatelier, 91120 Palaiseau, FRANCE  \authorcr
  \{\tt nayel.bettache, cristina.butucea\}@ensae.fr \\
    marianne.sorba@gmail.com}
\affil[*]{Corresponding author}
\begin{document}

\maketitle

\begin{abstract}
		We consider $n$ independent $p$-dimensional Gaussian vectors with covariance matrix having Toeplitz structure. We test that these vectors have independent components against a stationary distribution with sparse Toeplitz covariance matrix, and also select the support of non-zero entries. We assume that the non-zero values can occur in the recent past (time-lag less than $p/2$). We build test procedures that combine a sum and a scan-type procedures, but are computationally fast, and show their non-asymptotic behaviour in both one-sided (only positive correlations) and two-sided alternatives, respectively. We also exhibit a selector of significant lags and bound the Hamming-loss risk of the estimated support. These results can be extended to the case of nearly Toeplitz covariance structure and to sub-Gaussian vectors. Numerical results illustrate the excellent behaviour of both test  procedures and support selectors - larger the dimension $p$, faster are the rates.
\end{abstract}

\keywords{Covariance matrix, High-dimensional vectors, Hypothesis testing, Sparsity, Support recovery, Time Series}

	\section{Introduction}
Covariance matrices of high-dimensional vectors appear in machine learning, signal processing and statistical procedures. In these fields, e.g. in the test-phase of an algorithm or in the validation step of a statistical model, the quality of the residuals (the difference between the observed and the predicted values) is a good indicator of the good performance of the procedure. More precisely, the closer the residuals are to a white noise distribution, the less information was lost by the predictor or the model at hand. It is therefore natural to look for very weak, sparse information in the covariance matrix of such residuals.

Goodness-of-fit tests are designed to assess whether the underlying (unknown) covariance matrix of high-dimensional vectors is the identity (which defines the null hypothesis), or it is far from it with respect to some distance (the alternative hypothesis). The separation radius is a measure of how far the covariance matrix needs to be from the identity matrix in order to be able to distinguish it given the observations.
Another important information is to recover the support of the covariance matrix, i.e. the set where the non-null values can be found. As in high-dimensional regression, this support is used to reduce dimension of the problem, produce unbiased estimators of the non-null entries and so on. A selector is a vector with coordinates taking value 1 when the covariance value is non-null, respectively 0 when it is null. We appreciate the quality of a selector in Hamming loss, which counts the number of miss-classified coordinates.
Our main interests are both testing the covariance matrix and recovering the support of significant covariance elements under the alternative hypothesis of weak sparse covariance values.

We consider the p-dimensional observations $X_1,...,X_n$ independent, with Gaussian probability distribution $\mathcal{N}_p(0,\Sigma)$, where $\Sigma = [\sigma_{ij}]_{1 \leq i,j, p}$ belongs to the set $\mathcal{S}_p^{++}$ of positive definite symmetric matrices. Let us denote by $X$ a generic vector with the same Gaussian $\mathcal{N}_p(0,\Sigma)$ distribution.
	
More particularly, when the vector $X$ is issued from a stationary process, its covariance matrix $\Sigma$ has a Toeplitz structure, that is its diagonal elements are all constant and we denote by $\sigma_{i,j} =\text{Cov}(X^i,X^j)= \sigma_{|i-j|}$ for all $i,\, j$ from 1 to $p$. As mentionned in \cite{Chenetal}, stationary time series are used as approximations of geometrically ergodic time series (whose transition probabilities converge exponentially fast to the stationary distribution). The information on the Toeplitz matrix is fully contained in the vector $(\sigma_0,\sigma_1,..., \sigma_{p-1} )$ of its diagonal values. More generally, we may study similarly any covariance matrix by looking at the energy of each diagonal of the covariance matrix, that is its euclidean norm $\sigma_k = \|(\sigma_{1,k+1}, ...,\sigma_{p-k,p})\|_2$.
Here, we devote our efforts to quantifying the benefits of the Toeplitz structure in terms of rates for testing and for support recovery.

\bigskip

{\bf Contributions} In this paper, we first give a new variant of concentration inequality for quadratic forms of large Gaussian vectors and specify these bounds for covariance matrices that are Toeplitz with few non-null diagonals.  We show non-asymptotic separation rates for testing large sparse Toeplitz covariance matrices which are remarkably fast due to the structure of the matrix. We test here whether the covariance matrix is the identity matrix $I_p$ or there exists a number $s$ of covariance elements among $\sigma_1,...,\sigma_{p-1}$ that are significantly positive (one-sided alternative), respectively significantly different from zero (two-sided alternative). The test procedure combines a sum and a scan procedure in order to detect small (relatively) numerous non-null entries and very few but sufficiently large entries, respectively. This is analogous to but more general than the detection of sparse Gaussian means (\cite{IngsterI,IngsterII}, \cite{DonohoJin}) where observations have the same variance, whereas our model is heteroscedastic.

Moreover, we propose a selector of the diagonals with non-null entries - a lag selector, which is constructed by universal thresholding of some linear estimators. We provide fast non asymptotic bounds for the expected value of its loss.

Experimental results show the excellent behaviour of these procedures with small values of $n$ (non-asymptotic character of our results) and large values of $p$. Indeed, by exploiting the Toeplitz structure, the matrix size $p$ does not act as a nuisance parameter anymore, but diminishes the convergence rates. All test procedures and the lag-selector are computationally trivial to implement. Note that the scan procedure is performed on a vector as well and it is therefore computationally fast, in contrast with the scan procedure of matrices, see e.g. \cite{ButuceaIngster} or \cite{castro_bubeck_lugosi_bernoulli}.

\bigskip

{\bf Bibliography} Previously, Cai and Ma \cite{cai2013} considered the same goodness-of-fit test with alternative characterized by covariance values that belong to an $\mathbb{L}_2$ ball of fixed radius. Tests for sparse covariance matrices were given by Arias-Castro, Bubeck and Lugosi \cite{castro_bubeck_lugosi_aos} and \cite{castro_bubeck_lugosi_bernoulli}. They considered alternative covariance matrices having at most $s$ significant values and also the structured alternative of a clique of size $s$ producing a small submatrix of significant values. Our testing rates are faster, but they are difficult to compare as the Toeplitz structure does not allow for the block or the clique sparsity structure in their paper.
Butucea and Zgheib \cite{butucea_zgheib_jmva} and \cite{butucea_zgheib_ejs} considered the test problem with alternatives that generalize the $\mathbb{L}_2$-ball in \cite{cai2013} to dense ellipsoids for both Toeplitz and not necessarily Toeplitz covariance matrices, respectively. More precisely, it was assumed that $\sigma_k$ decreased slowly as a polynomial (Sobolev ellipsoids) or faster, as an exponential of $k$. The test procedure involved an optimal banding parameter - specific for testing and different from the optimal parameter for estimation of the matrix. It was thus noticed that the minimax rates for goodness-of-fit testing of large covariance matrices are faster for Toeplitz matrices than for non Toeplitz ones, and that they are faster for testing than for estimation of the covariance matrix. In this paper, we consider an alternative class where at most $s$ significant values appear sparsely.

Cai and Liu \cite{doi:10.1198/jasa.2011.tm10560} and Cai, Liu and Xia \cite{cai_liu_xia_JASA} considered the problem of support recovery in the sense that the estimated set $\hat{\mathbf{S}}_n$ is different from the true set $\mathbf{S}$ with probability tending to 0. To the best of our knowledge, no quantitative rates were given for support recovery in the covariance matrix setup. In the context of Toeplitz covariance matrices, we call this problem lag-selection.

Our bounds for testing and lag selection are non-asymptotic, thus $n$ can be equal to 1 when we cannot observe repeated measurements. However, an important remark is that the rates are faster when the significant covariance values have lags in the recent past: $k \leq S$, for some $S < p$. Indeed, the rates depend on $p-S$.
From an asymptotic point of view, $s$ can tend to infinity as $p$ tends to infinity, thus we allow a nonparametric model (in the sense that the number of parameters increases). Such models have only been considered in nonparametric estimation of the spectral density of stationary time series, see Kreiss, Paparoditis and Politis \cite{paparo_politis} who uses thresholded empirical covariance coefficients.
	

	\section{Linear functionals of the covariance matrix}
	We define $\varphi_A$ the linear functional of the covariance matrix $\Sigma$ associated to the matrix $A$ belonging to $\mathcal{S}_p$ (the set of symmetric $p\times p$ matrices) as $\varphi_A(\Sigma) = \mathrm{Tr}(A\Sigma)$.
	%
	Recall that $\mathrm{Tr}(A^2)$ is also denoted by $||A||_F^2$, the Frobenius norm squared, for any $A$ in $\mathcal{S}_p$. We denote by $\|A\|_\infty$ the largest eigenvalue of the matrix $A$.
We recall that a centered real-valued random variable $Z$ is sub-exponential with positive parameters $(\nu^2, b)$ if \begin{equation}
		\mathbb{E}[\exp(t Z)]\leq \exp\left(\frac{\nu^2 t^2}{2}\right) \space \text{, for all } |t| \leq \frac 1b  . \label{subexp}
		\end{equation}

The sample covariance matrix is denoted
	\[
\Sigma_n=\frac{1}{n}\sum_{k=1}^nX_kX_k^T.
\]

The next theorem states that for $X_1,...,X_n$ independent multivariate Gaussian $\mathcal{N}_p(0,\Sigma)$ vectors, the random variable $Z = \varphi_A(\Sigma_n-\Sigma)$, for $A$ in $\mathcal{S}_p$, is sub-exponential with explicit values for the parameters $(\nu^2, b)$. We recall the Bernstein inequality that holds for sub-exponential random variables \cite{wainwright2019high}.

\begin{proposition} \label{Bernstein}
	If $Z$ is a sub-exponential random variables with parameters $(\nu^2, b)$, then
\begin{equation*}
	\mathbb{P}[Z \geq t]\leq \left\{\begin{array}{c}
	\exp\left(-\frac{t^2}{2\nu^2}\right) \text{ if }\hspace{0.2cm} 0\leq t \leq\frac{\nu^2}{b}\\
	\exp\left(-\frac{t}{2b}\right) \text{ if}\hspace{0.2cm} t >\frac{\nu^2}{b}\hspace{1cm}\\
	\end{array}\right.
\end{equation*}
	Equivalently, $Z$ verifies :
$$
\mathbb{P}[Z \geq t_u]\leq \exp\left(-\frac{u}{2}\right), \mbox{ for all } u>0,
$$
	where $t_u= \max(\nu\sqrt{u},bu)$.
\end{proposition}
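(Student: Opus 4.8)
The plan is to apply the Chernoff bounding technique and then optimize the free parameter over the admissible range $[0,1/b]$ dictated by the sub-exponential hypothesis \eqref{subexp}. First, for any $\lambda$ with $0\leq\lambda\leq 1/b$ and any $t\geq 0$, Markov's inequality applied to the nonnegative random variable $e^{\lambda Z}$ gives
\[
\mathbb{P}[Z\geq t]=\mathbb{P}[e^{\lambda Z}\geq e^{\lambda t}]\leq e^{-\lambda t}\,\mathbb{E}[e^{\lambda Z}]\leq\exp\left(-\lambda t+\frac{\nu^2\lambda^2}{2}\right),
\]
where the last inequality is exactly the defining bound \eqref{subexp}. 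It then remains to minimize the exponent $g(\lambda)=-\lambda t+\nu^2\lambda^2/2$ over $\lambda\in[0,1/b]$.

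Second, I would carry out the constrained minimization by a two-case analysis according to the location of the unconstrained minimizer $\lambda^\star=t/\nu^2$. When $t\leq\nu^2/b$ we have $\lambda^\star\leq 1/b$, so the choice $\lambda=\lambda^\star$ is admissible and substituting yields $g(\lambda^\star)=-t^2/(2\nu^2)$, giving the first branch. When $t>\nu^2/b$ the minimizer $\lambda^\star$ falls outside the feasible interval; since $g$ is decreasing on $[0,\lambda^\star]$, the constrained optimum is attained at the endpoint $\lambda=1/b$, where $g(1/b)=-t/b+\nu^2/(2b^2)$. Using $t>\nu^2/b$ to bound $\nu^2/(2b^2)<t/(2b)$ gives $g(1/b)<-t/(2b)$, which is the second branch.

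Finally, to obtain the equivalent formulation with $t_u=\max(\nu\sqrt{u},bu)$, I would check that each regime of the maximum lands in the matching branch of the display above. If $u\leq\nu^2/b^2$ then $t_u=\nu\sqrt{u}\leq\nu^2/b$, so the Gaussian branch applies and $-t_u^2/(2\nu^2)=-u/2$; if $u>\nu^2/b^2$ then $t_u=bu>\nu^2/b$, so the exponential branch applies and $-t_u/(2b)=-u/2$. In both cases $\mathbb{P}[Z\geq t_u]\leq\exp(-u/2)$, as claimed.

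I do not expect a genuine obstacle here, as the argument is the textbook Chernoff--Bernstein computation. The only point requiring care is the boundary case of the optimization when $t>\nu^2/b$, where one must \emph{not} use the (infeasible) value $\lambda^\star$ but instead the endpoint $1/b$, and then verify that the resulting exponent still dominates $-t/(2b)$; this is precisely where the factor $2$ in the exponent originates.
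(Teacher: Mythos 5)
Your proof is correct: the Chernoff bound with the feasibility constraint $\lambda\in[0,1/b]$, the two-case optimization (interior minimizer when $t\leq\nu^2/b$, endpoint $\lambda=1/b$ otherwise), and the regime-matching check for $t_u=\max(\nu\sqrt{u},bu)$ are all carried out accurately. The paper does not prove this proposition itself but simply recalls it from Wainwright's book, and your argument is precisely the standard derivation given there, so there is nothing to compare beyond noting that you have correctly reconstructed the cited result.
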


Thus, the concentration inequality for the plug-in estimator $	\varphi_A(\Sigma_n)$ of $\varphi_A(\Sigma)$ follows immediately.

	\begin{theorem}\label{Theorem_subexp}
The random variable $\varphi_A(\Sigma_n-\Sigma)$ (respectively $\varphi_A(\Sigma-\Sigma_n)$) is centered and sub-exponential with parameters $\left(\nu^2=\frac{2||A\Sigma||_F^2}{n(1-K)},b=\frac{2||A\Sigma||_\infty}{nK}\right)$, for some arbitrary $K$ in $]0,1[$. Therefore, we have :
	\begin{equation}
	\mathbb{P}[\varphi_A\left(\Sigma_n-\Sigma\right)\geq t_u]\leq \exp\left(-\frac{u}{4}\right) \text{, for all } u>0, \label{eq:BI}
	\end{equation}
	where $t_u=\max\left\{\sqrt{u}\frac{||A\Sigma||_F}{\sqrt{n(1-K)}},u\frac{||A\Sigma||_\infty}{nK}\right\}$.
\end{theorem}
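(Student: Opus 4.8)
The plan is to reduce $Z := \varphi_A(\Sigma_n-\Sigma)$ to a centered sum of i.i.d.\ weighted chi-square variables and to control its moment generating function directly. Since $\varphi_A(\Sigma_n) = \mathrm{Tr}(A\Sigma_n) = \frac1n\sum_{k=1}^n X_k^T A X_k$ and $\mathbb{E}[X^TAX]=\mathrm{Tr}(A\Sigma)=\varphi_A(\Sigma)$, the variable $Z=\frac1n\sum_{k=1}^n\big(X_k^TAX_k-\mathrm{Tr}(A\Sigma)\big)$ is centered. Writing $X_k=\Sigma^{1/2}g_k$ with $g_k\sim\mathcal{N}_p(0,I_p)$ and diagonalising the symmetric matrix $B:=\Sigma^{1/2}A\Sigma^{1/2}$, each quadratic form becomes $X_k^TAX_k=\sum_{i=1}^p\lambda_i\xi_{k,i}^2$, where the $\lambda_i$ are the eigenvalues of $B$ and the $\xi_{k,i}$ are i.i.d.\ standard Gaussians. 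Two facts will then be used: since $B$ is similar to $A\Sigma$ (using $\Sigma\in\mathcal{S}_p^{++}$), its eigenvalues are real and satisfy $\max_i|\lambda_i|\le\|A\Sigma\|_\infty$, while $\sum_i\lambda_i^2=\mathrm{Tr}((A\Sigma)^2)\le\|A\Sigma\|_F^2$, the latter inequality following from $2M_{ij}M_{ji}\le M_{ij}^2+M_{ji}^2$ applied to $M=A\Sigma$.

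Next I would compute the log-MGF of a single centered term. Using $\mathbb{E}[e^{s\xi^2}]=(1-2s)^{-1/2}$ for $s<1/2$ and independence of the $\xi_{k,i}$,
$$\log\mathbb{E}\Big[\exp\big(s(X_1^TAX_1-\mathrm{Tr}(A\Sigma))\big)\Big]=\sum_{i=1}^p\Big(-\tfrac12\log(1-2s\lambda_i)-s\lambda_i\Big),$$
valid whenever $2s\lambda_i<1$ for all $i$. The key elementary inequality is $-\log(1-x)-x\le\frac{x^2}{2(1-|x|)}$ for $|x|<1$, obtained by integrating $t\mapsto t/(1-t)$, applied with $x=2s\lambda_i$. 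If $|s|\le\frac{K}{2\|A\Sigma\|_\infty}$ then $|2s\lambda_i|\le K$ for every $i$, so each summand is at most $\frac{s^2\lambda_i^2}{1-K}$, and summing gives $\log\mathbb{E}[\cdots]\le\frac{s^2}{1-K}\sum_i\lambda_i^2\le\frac{s^2\|A\Sigma\|_F^2}{1-K}$. This is precisely where the free parameter $K\in(0,1)$ enters twice: once to lower bound the denominator by $1-K$, and once to fix the admissible range of $s$.

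Finally I would transfer this to $Z$ by the i.i.d.\ tensorisation $\log\mathbb{E}[e^{tZ}]=n\log\mathbb{E}[e^{(t/n)(X_1^TAX_1-\mathrm{Tr}(A\Sigma))}]$. Taking $s=t/n$ in the single-term bound yields, for all $|t|\le\frac{nK}{2\|A\Sigma\|_\infty}$,
$$\log\mathbb{E}[e^{tZ}]\le\frac{t^2\|A\Sigma\|_F^2}{n(1-K)}=\frac{\nu^2t^2}{2},\qquad \nu^2=\frac{2\|A\Sigma\|_F^2}{n(1-K)},$$
which is exactly the sub-exponential condition (\ref{subexp}) with $b=\frac{2\|A\Sigma\|_\infty}{nK}$. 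Because the elementary inequality is symmetric in the sign of $x$ (and in fact sharper for $x<0$), the same bound holds for $t<0$, hence identically for $-Z=\varphi_A(\Sigma-\Sigma_n)$. The concentration inequality (\ref{eq:BI}) then follows by inserting these parameters into Proposition \ref{Bernstein}: evaluating at $t=\tilde\nu\sqrt u$ in the first regime and at $t=\tilde b u$ in the second, with $\tilde\nu^2=\nu^2/2$ and $\tilde b=b/2$, produces the stated threshold $t_u$ and the exponent $-u/4$ in place of $-u/2$.

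The hard part will only be the bookkeeping of constants: checking that the bound on $-\log(1-x)-x$ is applied uniformly over eigenvalues of either sign, that the two roles of $K$ (denominator versus radius of convergence) recombine into exactly the announced pair $(\nu^2,b)$, and that the final exponent $-u/4$ arises because $t_u$ is written without the factors $\sqrt2$ and $2$ carried respectively by $\nu$ and $b$.
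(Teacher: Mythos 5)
Your proposal is correct and follows essentially the same route as the paper: the paper computes the moment generating function as $\det(I_p-\tfrac{2t}{n}\Sigma^{1/2}A\Sigma^{1/2})^{-n/2}$ and bounds the log-determinant by the same eigenvalue series expansion of $-\log(1-x)-x$ that you use (with $K$ playing the identical double role of bounding the geometric series and fixing the admissible range of $t$), before invoking Proposition \ref{Bernstein} with the $u\mapsto u/2$ substitution to get the exponent $-u/4$. Your passage through the scalar chi-square MGF is just the diagonalised form of the paper's determinant computation, and your explicit handling of negative eigenvalues via $|x|$ is if anything slightly more careful than the paper's term-by-term comparison.
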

Previous concentration inequalities were given for such functionals. The closest to our case is the chi-square type concentration inequality in Spokoiny and Zhilova \cite{Spokoiny2013} for standardized Gaussian vectors and generalized to sub-Gaussian vectors. They generalized Hsu, Kakade and Zhang \cite{hsu2012} who assumed finite exponential moments of any order for the vector $X$. Let us also mention Giurcanu and Spokoiny \cite{giurcanu_spokoiny} who gave a Bernstein inequality for the empirical covariance element of a stationary centered Gaussian process and generalized it to locally stationary Gaussian processes.
	
Let us also mention the Hanson-Wright inequality which is stated for more general sub-Gaussian vectors but having independent components i.e. a diagonal covariance matrix (see Rudelson and Vershynin \cite{Rudelson_Ver} and its improvement under Bernstein condition on moments by Bellec \cite{Bellec2016ConcentrationOQ}).

The concentration inequality (\ref{eq:BI}) is the main tool in the applications that we consider hereafter to study stationary time series. In this context, we assume that $X_1,..,X_n$ are repeated, independent observations of length $p$ of an underlying stationary process $X = \{X^1,...,X^p,...\}$.  Note that our results are non-asymptotic, thus $n$ can be equal to 1. Without loss of generality, we assume that the process is centered. The covariance matrix of a stationary process is a Toeplitz covariance matrix, and we denote by $\sigma_{|i-j|}=\text{Cov}(X^i,X^j)$.
Let us denote by $\mathcal{T}_p$ the set of $p\times p$ Toeplitz matrices and by $|S|$ the cardinal of a set $S$.
\begin{definition}
	We define $\mathcal{F}_+(s,S,\sigma)$, for $\sigma >0$ real number and $s\leq S$ integer numbers between 1 and $p-1$, the set of sparse Toeplitz covariance matrices $\Sigma$ such that there are $s$ significantly positive  covariance elements with lags no larger than $S$ :
	\begin{eqnarray*}
\mathcal{F}_+(s,S,\sigma) &=& \left\{\Sigma\in \mathcal{S}_p^{++}\cap\mathcal{T}_p \mbox{ and }
\mbox{ there exists } \mathbf{S} \subseteq \{1,...,S\} \text{  such that  } |\mathbf{S}|=s, \right.\\
&& \left. \sigma_j \mbox{ is } \left\{\begin{array}{ll}
	\geq\sigma> 0, &\text{ for all }j\in \mathbf{S}\\
	=0, & \text{ for all }j\in \{1,p-1\} \backslash \mathbf{S}\\
	\end{array}\right\}\right\}.
\end{eqnarray*}
Similarly, we define the two-sided set $\mathcal{F}(s,S,\sigma)$ :
	\begin{eqnarray*}
\mathcal{F}(s,S,\sigma) &=& \left\{\Sigma\in \mathcal{S}_p^{++}\cap\mathcal{T}_p \mbox{ and }
\mbox{ there exists } \mathbf{S} \subseteq \{1,...,S\} \text{  such that  } |\mathbf{S}|=s, \right.\\
&& \left. |\sigma_j |\mbox{ is } \left\{\begin{array}{ll}
	\geq\sigma> 0, &\text{ for all }j\in \mathbf{S}\\
	=0, & \text{ for all }j\in \{1,p-1\} \backslash \mathbf{S}\\
	\end{array}\right\}\right\}.
\end{eqnarray*}
\vspace{0.8cm}
\end{definition}

Let us apply Theorem \ref{Theorem_subexp} to several choices of the matrices $A$. First, the covariance element $\sigma_j$, $j \geq 1$, can be written as $\sigma_j=\mathbb{E}[X^TA_jX]=\mathrm{Tr}(A_j\Sigma)$, with $[A_j]_{k\ell} = \frac 1{2(p-j)} I(|k-\ell| = j)$ - a matrix that has 0 elements except on $j$th upper and lower diagonals. Note that we use notation $A_j$ instead of $A_{\{j\}}$. The empirical estimator of $\sigma_j$ can be written as
\[
\hat{\sigma_{j}} 
=\frac{1}{n}\sum_{k=1}^{n}X_k^T A_j X_k=\mathrm{Tr}(A_j \Sigma_n).
\]

	\begin{remark}{\rm
		It is useful to note that our results can be generalized to time series that are "nearly" stationary, by considering :
\[
\tilde{\sigma_j}=\mathrm{Tr}(A_j \Sigma_n) =\frac{1}{2(p-j)}\sum_{i,k=1,|i-k|=j}^{p}\sigma_{i,k} .
\]
In that case, we consider slightly different sets of sparse covariance matrices: $\widetilde{\mathcal{F}}_+(s,S,\sigma)$ and $\widetilde{\mathcal{F}}(s,S,\sigma)$ of not necessarily Toeplitz matrices with $s$ diagonal average values $\tilde{\sigma_j}$ of the first $S$ being significant. By taking into consideration that all methods that we study in the sequel for testing and lag selection are exclusively based on the concentration of the mean empirical correlations around their expected values $\tilde{\sigma_j}$, the following results remain valid provided that we control $||A \Sigma||_F$ and $||A \Sigma||_\infty$.}
	\end{remark}
 Let $W \subseteq \{1,...,S\}$ be a set of $w$ values between 1 and S. We estimate
 \[
 \sum_{j\in W}\sigma_j = \mathrm{Tr}(A_W \Sigma), \quad \text{where }A_W=\sum_{j\in W}A_j
 \] by $\mathrm{Tr}(A_W\Sigma_n)$.

	\begin{proposition}\label{prop_norm}
	Let $W \subseteq \{1,...,S\}$ contain $w$ elements and  $A_W=\sum_{j\in W}A_j$. We have :
	\begin{enumerate}
		\item $||A_W||_\infty\leq\frac{w}{p-S}$ and $||A_W||_F^2 \leq\frac{w}{2(p-S)}$
		\item For any covariance matrix $\Sigma$ belonging to   $\mathcal{F}(s,S,\sigma)$, \\ $||A_W\Sigma||_\infty \leq \sigma_0 \frac{w(2s+1)}{p-S}$ and
$	||A_W \Sigma||_F^2\leq \sigma_0^2 \cdot \left\{\begin{array}{c}
		\frac{\mathcal{K}(2s+1)}{(p-S)}, \quad \text{if}\quad  w=1\\
		\frac{w(2s+1)^2}{2(p-S)}, \quad \text{if}\quad w>1
		\end{array}\right.$ \\
		where $\mathcal{K}=\left\{\begin{array}{c}
		1,\quad \text{if} \quad W \subseteq\{1,...,\frac{p}{2}-1\}\\
		\frac{p}{2},\quad \text{if} \quad W \subseteq\{\frac{p}{2},...,p-1\}.\hspace{0.5cm}			 \end{array}\right.$
	\end{enumerate}
\end{proposition}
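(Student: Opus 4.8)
The plan is to handle the four claims in increasing order of difficulty, isolating the refined $w=1$ Frobenius bound as the only delicate point.

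For the first part I would exploit that the matrices $A_j$, $j\in W$, have pairwise disjoint supports, since $A_j$ is carried exactly by the two diagonals at distance $j$ from the main one. Hence $\|A_W\|_F^2=\sum_{j\in W}\|A_j\|_F^2$, and each $A_j$ has $2(p-j)$ nonzero entries equal to $1/(2(p-j))$, so $\|A_j\|_F^2=1/(2(p-j))\le 1/(2(p-S))$ because $j\le S$; summing over the $w$ indices gives the stated bound. For the spectral norm I would use that $A_W$ is symmetric with zero diagonal and bound its largest eigenvalue by the maximal absolute row sum (Gershgorin): each $A_j$ contributes at most two entries per row, each equal to $1/(2(p-j))$, so the absolute row sum is at most $\sum_{j\in W}1/(p-j)\le w/(p-S)$.

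For the second part I would first record the elementary fact that, for $\Sigma$ positive semidefinite with constant diagonal $\sigma_0$, every $2\times 2$ principal minor forces $|\sigma_j|\le\sigma_0$; hence the maximal absolute row sum of $\Sigma$ is at most $\sigma_0+2s\sigma_0=(2s+1)\sigma_0$, so $\|\Sigma\|_\infty\le(2s+1)\sigma_0$. Submultiplicativity $\|A_W\Sigma\|_\infty\le\|A_W\|_\infty\|\Sigma\|_\infty$ then yields the operator-norm bound through the first part. The Frobenius bound for $w>1$ follows in one line from $\|A_W\Sigma\|_F\le\|A_W\|_F\,\|\Sigma\|_\infty$ combined with the two bounds just obtained, which reproduces exactly $\sigma_0^2\,w(2s+1)^2/(2(p-S))$.

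The delicate point, and the main obstacle, is the sharper constant in the $w=1$ case, where the crude factor $(2s+1)^2$ must be improved to $(2s+1)$; here submultiplicativity is too lossy. Instead I would compute $\|A_j\Sigma\|_F^2=\mathrm{Tr}(A_j^2\Sigma^2)=(2(p-j))^{-2}\,\mathrm{Tr}(B_j^2\Sigma^2)$, where $B_j=2(p-j)A_j$ carries unit entries on the two $j$-diagonals. The key structural observation is that $B_j^2$ is supported only on the main diagonal and on the two diagonals at distance $2j$: its diagonal entries sum to $\|B_j\|_F^2=2(p-j)$, and since $(\Sigma^2)_{kk}=\|\mathrm{row}_k(\Sigma)\|_2^2\le(2s+1)\sigma_0^2$, the diagonal part of the trace is at most $(2s+1)\sigma_0^2/(2(p-j))$. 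For $j<p/2$ the two $2j$-diagonals are present; bounding each off-diagonal entry $(\Sigma^2)_{k\pm 2j,k}$ by $(2s+1)\sigma_0^2$ via Cauchy--Schwarz and counting the $p-2j\le p-j$ contributing positions, their contribution is again at most $(2s+1)\sigma_0^2/(2(p-j))$, so the two parts combine to $(2s+1)\sigma_0^2/(p-j)\le(2s+1)\sigma_0^2/(p-S)$, the case $\mathcal{K}=1$. For $j\ge p/2$ one has $2j\ge p$, so the $2j$-diagonals fall outside the matrix and vanish, leaving only the diagonal part, namely $(2s+1)\sigma_0^2/(2(p-S))$, which is a fortiori bounded by $(p/2)(2s+1)\sigma_0^2/(p-S)$. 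The bookkeeping of which diagonals of $B_j^2$ survive at the boundary, together with the Cauchy--Schwarz control of the off-diagonal entries of $\Sigma^2$, is where the real work concentrates.
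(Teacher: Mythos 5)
Your proof is correct and follows essentially the same route as the paper: Gershgorin plus entry counting for part 1, submultiplicativity for $\|A_W\Sigma\|_\infty$, the bound $\|A_W\Sigma\|_F\le\|A_W\|_F\|\Sigma\|_\infty$ for $w>1$, and, for the delicate $w=1$ case, the same explicit computation showing that $A_j^2$ is supported on the main diagonal and the two diagonals at distance $2j$, with the same counts $2(p-j)$ and $2(p-2j)$. The only (cosmetic) difference is in how the single factor $(2s+1)$ is extracted there: the paper first peels off $\|\Sigma\|_\infty\le(2s+1)\sigma_0$ via $\mathrm{Tr}(A_j^2\Sigma^2)\le\|\Sigma\|_\infty\,\mathrm{Tr}(A_j^2\Sigma)$ and then bounds $\mathrm{Tr}(A_j^2\Sigma)$ entrywise using $|\sigma_{k,l}|\le\sigma_0$, whereas you bound $\mathrm{Tr}(A_j^2\Sigma^2)$ entrywise directly using $|(\Sigma^2)_{k,l}|\le(2s+1)\sigma_0^2$ from Cauchy--Schwarz on the $(2s+1)$-sparse rows of $\Sigma$; both give identical constants.
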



The next Corollary specifies the concentration inequality in Theorem~\ref{Theorem_subexp} using the bounds in the Proposition \ref{prop_norm} above.

\begin{corollary}\label{corollary_1}
Let $X_1,..,X_n$ be i.i.d, $\mathcal{N}_p(0_p,\Sigma)$, $\Sigma $ belonging to
$ \mathcal{F}_+(s,S,\sigma)$ or $\mathcal{F}(s,S,\sigma)$ and $W \subseteq \{1,...,S\}$ with $S<\frac{p}{2}$ having $w$ elements. Then, for some arbitrary $K$ in $]0,1[$,
\begin{equation}
\mathbb{P}_{{I_p}}[\varphi_{A_W}\left(\Sigma_n-{I_p}\right)\geq \sigma_0 \cdot t ]\leq \exp\left(-\frac{u}{4}\right),\quad\text{for all}\quad u> 0,\label{erreur1}
\end{equation}  	where $$t=\max\left\{\sqrt{\frac{u}{2(1-K)}}\sqrt{\frac{w}{n(p-S)}},\frac{u}{K}\frac{w}{n(p-S)}\right\}.$$
Moreover, for any $\Sigma$ in $\mathcal{F}(s,S,\sigma)$,
\begin{equation}
\mathbb{P}_{\mathrm{\Sigma}}[\varphi_{A_W}\left(\Sigma_n-\Sigma\right)\geq \sigma_0 \cdot\tilde{t}]\leq \exp\left(-\frac{u}{4}\right),\quad\text{for all}\quad u>0,\label{erreur2}\end{equation} 	where $$\tilde{t}=\max\left\{\sqrt{\frac{u}{(1-K)}}\sqrt{\frac{2s+1}{n(p-S)}},\frac{u}{K}\frac{2s+1}{n(p-S)}\right\}\quad \text{if}\quad w=1$$ and $\tilde{t}=(2s+1)t$ if $w>1$.
\end{corollary}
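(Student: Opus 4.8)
The plan is to apply Theorem~\ref{Theorem_subexp} directly with the choice $A = A_W$ and then substitute the norm bounds of Proposition~\ref{prop_norm} into the threshold $t_u$. Recall that Theorem~\ref{Theorem_subexp} gives
\[
\mathbb{P}[\varphi_{A_W}(\Sigma_n - \Sigma) \geq t_u] \leq \exp(-u/4),
\quad
t_u = \max\left\{\sqrt{u}\,\frac{\|A_W\Sigma\|_F}{\sqrt{n(1-K)}},\, u\,\frac{\|A_W\Sigma\|_\infty}{nK}\right\}.
\]
Since both arguments of this maximum are increasing in $\|A_W\Sigma\|_F$ and $\|A_W\Sigma\|_\infty$, replacing these two quantities by the upper bounds of Proposition~\ref{prop_norm} produces an enlarged threshold $\bar t_u \geq t_u$; consequently $\{\varphi_{A_W}(\Sigma_n-\Sigma)\geq \bar t_u\}\subseteq\{\varphi_{A_W}(\Sigma_n-\Sigma)\geq t_u\}$, so the tail bound $\exp(-u/4)$ is inherited. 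The whole argument therefore reduces to identifying $\bar t_u$ with $\sigma_0 t$ (resp.\ $\sigma_0\tilde t$) by algebraic simplification.

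For the null statement~\eqref{erreur1}, I would set $\Sigma = I_p$, so that $A_W\Sigma = A_W$ and $\sigma_0 = 1$, and invoke item~1 of Proposition~\ref{prop_norm}: $\|A_W\|_\infty \leq w/(p-S)$ and $\|A_W\|_F^2 \leq w/(2(p-S))$. Substituting the square root of the Frobenius bound into the first argument gives $\sqrt{u/(2(1-K))}\,\sqrt{w/(n(p-S))}$, and substituting the operator-norm bound into the second gives $(u/K)\,w/(n(p-S))$; the maximum of these two is exactly $t$, and since $\sigma_0 = 1$ under $I_p$ this is $\sigma_0 t$, as claimed.

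For the alternative statement~\eqref{erreur2}, I would apply item~2 of Proposition~\ref{prop_norm} to a general $\Sigma \in \mathcal{F}(s,S,\sigma)$. The hypothesis $S < p/2$ forces $W \subseteq \{1,\dots,S\} \subseteq \{1,\dots,p/2-1\}$, hence $\mathcal{K} = 1$ in that proposition. When $w = 1$, the bounds read $\|A_W\Sigma\|_F^2 \leq \sigma_0^2(2s+1)/(p-S)$ and $\|A_W\Sigma\|_\infty \leq \sigma_0(2s+1)/(p-S)$; plugging them in and factoring out $\sigma_0$ yields precisely the two arguments of $\tilde t$ for $w=1$. When $w > 1$, the bounds read $\|A_W\Sigma\|_F^2 \leq \sigma_0^2 w(2s+1)^2/(2(p-S))$ and $\|A_W\Sigma\|_\infty \leq \sigma_0 w(2s+1)/(p-S)$; factoring out $\sigma_0(2s+1)$ leaves exactly the two arguments of $t$, so that the threshold becomes $\sigma_0(2s+1)t$, i.e.\ $\sigma_0\tilde t$ with $\tilde t = (2s+1)t$.

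The proof is thus entirely mechanical once Theorem~\ref{Theorem_subexp} and Proposition~\ref{prop_norm} are in hand; there is no genuine obstacle, only bookkeeping. The only points requiring care are the direction of the monotonicity argument (upper-bounding the norms enlarges the threshold and therefore preserves the tail bound) and the case split $w = 1$ versus $w > 1$, together with the observation that $S < p/2$ is exactly what is needed to set $\mathcal{K} = 1$ and thereby obtain the cleaner Frobenius constant in the $w = 1$ regime.
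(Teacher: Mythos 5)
Your proposal is correct and is exactly the argument the paper intends: the corollary is obtained by applying Theorem~\ref{Theorem_subexp} with $A=A_W$ and substituting the norm bounds of Proposition~\ref{prop_norm} (with $\mathcal{K}=1$ since $S<p/2$), using monotonicity of the threshold in $\|A_W\Sigma\|_F$ and $\|A_W\Sigma\|_\infty$. The paper gives no separate proof beyond this remark, and your case split $w=1$ versus $w>1$ and the identification $\sigma_0=1$ under $I_p$ match the stated thresholds exactly.
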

\vspace{0.8cm}

Similar inequalities hold for $|\varphi_{A_W}\left(\Sigma_n-{I_p}\right)|$ and $ |\varphi_{A_W}\left(\Sigma_n-\Sigma\right)|$ multiplies the exponential term by a factor two in (\ref{erreur1}) and (\ref{erreur2}).

If $W=\{1,...,S\}$, it is enough to replace $w$ by $S$ in the previous results. However, if $W=\{j\}$ for some $j\leq S$, the previous results are still true with $w$ replaced by 1.

From now on, we assume that $S< \frac{p}{2}$ such that $\mathcal{K}=1$ in the previous Proposition. Indeed, in the context of time series, it is natural to look for significant correlations in the recent past.


\section{Non-parametric testing for stationary time series}
From now on, we assume for simplicity that $\sigma_0=1$, thus dealing with correlation matrices only.
The one-sided test problem is
\[
H_0 : \Sigma = {I_p}, \quad \text{vs. } H_1 : \Sigma \in \mathcal{F}_+(s,S,\sigma).
\] \\ The following two-sided test problem will also be discussed as a generalization
\[
H_0 : \Sigma = {I_p} \quad \text{vs. } H_1 : \Sigma \in \mathcal{F}(s,S,\sigma).
\] \\
Recall that a test procedure $\Delta_n$ is a binary valued random variable $\Delta_n : (\mathbb{R}^{p})^{\otimes n}\rightarrow\{0,1\}$. It separates the set of possible outcomes of some random event in two contiguous sets, we decide to reject $H_0$ whenever $\Delta_n=1$ and to accept $H_0$ whenever $\Delta_n=0$. The maximal testing risk is defined as
\[
R(\Delta_n,\mathcal{F}_+)=\mathbb{P}_{I_p}(\Delta_n=1)+\sup_{\Sigma \in \mathcal{F}_+}\mathbb{P}_\Sigma(\Delta_n=0),
\]
that is the sum of the type $I$ and the maximal type $II$ error probabilities over the set in the alternative hypothesis. A separation rate is the least possible value for $\sigma>0$ such that the maximal testing risk stays below some prescribed value.

We proceed by considering successively two measures of the separation between ${I_p}$ and $\Sigma$ under the alternative hypothesis $H_1$. We choose successively the sets $W = \{1,...,S\}$ and $W = \mathbf{S}$, and arbitrary subset of $\{1,...,S\}$ with $s$ elements.
For testing over $\mathcal{F}_+(s,S,\sigma)$, we consider
\[
\mathrm{Tr}(A_{1:S})
\quad \text{and }
\max_{\mathbf{S} \subseteq \{1,...,S\}, \#\mathbf{S}=s}\mathrm{Tr}(\mathcal{A}_\mathbf{S}\Sigma).
\]
Correspondingly, over $\mathcal{F}(s,S,\sigma)$ we consider
\[\sum_{j=1}^{S}|\sigma_j|=\sum_{j=1}^{S}|\mathrm{Tr}(A_j\Sigma)|
 \quad \text{and }
  \max_{\mathbf{S} \subseteq \{1,...,S\}, \#\mathbf{S}=s}\sum_{j\in \mathbf{S}}^{S}|\mathrm{Tr}(A_j\Sigma)|.
  \]

By analogy to the vector case, we distinguish moderately sparse and highly sparse covariance structure. In the first case, the sum of all $S$ values will allow to test, whereas in the latter a search over subsets of size $s$ will be necessary. This is called a scan procedure and it is computationally fast for vectors. Note that, if the sparsity $s$ is unknown a second search over different possible values of $s$ will produce an aggregated procedure, free of $s$.

\subsection{Moderately sparse covariance structure}
When the alternative hypothesis is $\mathcal{F}^+(s,S,\sigma)$, we consider for some $t_{n,p}^{MS+}$ the test procedure
\begin{equation}
\Delta_n^{MS+}=I\left(\varphi_{A_{1:S}}(\Sigma_n - I_p)\geq t_{n,p}^{MS+}\right). \label{test_MS}
\end{equation}
\begin{theorem}\label{Theorem_testMS}
	The test $\Delta_n^{MS+}$ defined in (\ref{test_MS}), with
\[
t_{n,p}^{MS+}=\max\left\{\sqrt{\frac{u \cdot S}{n(p-S)}}, \frac{2u \cdot S}{n(p-S)}\right\}
\] for $u>0$ is such that
\[
R(\Delta_n^{MS+},\mathcal{F}^+)\leq 2\exp\left(-\frac{u}{4}\right)
\] provided that $\sigma \geq \frac{2(s+1)}{s}t_{n,p}^{MS+}$.
\end{theorem}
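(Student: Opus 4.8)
The plan is to split the risk into its two parts,
$R(\Delta_n^{MS+},\mathcal{F}_+)=\mathbb{P}_{I_p}(\Delta_n^{MS+}=1)+\sup_{\Sigma\in\mathcal{F}_+}\mathbb{P}_\Sigma(\Delta_n^{MS+}=0)$,
and to show that each summand is at most $\exp(-u/4)$. Throughout I would take $W=\{1,\dots,S\}$, so that $w=S$, recall that $\sigma_0=1$, and fix the free parameter of Corollary \ref{corollary_1} to the value $K=1/2$. This choice is the crux of the calibration: with $K=1/2$ the threshold $t$ of (\ref{erreur1}) collapses exactly to $t_{n,p}^{MS+}$, since the first branch becomes $\sqrt{uS/(n(p-S))}$ and the second becomes $2uS/(n(p-S))$.

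For the type I error, under $H_0$ we have $\Sigma=I_p$ and the event $\{\Delta_n^{MS+}=1\}$ is precisely $\{\varphi_{A_{1:S}}(\Sigma_n-I_p)\geq t_{n,p}^{MS+}\}$. Applying (\ref{erreur1}) with $w=S$, $\sigma_0=1$ and $K=1/2$ gives $\mathbb{P}_{I_p}(\Delta_n^{MS+}=1)\leq\exp(-u/4)$ directly, with no condition on $\sigma$.

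The type II analysis is the main step and is where the separation condition enters. First I would recenter the statistic: since $\mathbb{E}_\Sigma[\Sigma_n]=\Sigma$ and each $A_j$ with $j\geq 1$ vanishes on the main diagonal, the mean is $\mathbb{E}_\Sigma[\varphi_{A_{1:S}}(\Sigma_n-I_p)]=\varphi_{A_{1:S}}(\Sigma)=\sum_{j=1}^{S}\sigma_j=\sum_{j\in\mathbf{S}}\sigma_j\geq s\sigma$ for every $\Sigma\in\mathcal{F}_+(s,S,\sigma)$. Writing $\varphi_{A_{1:S}}(\Sigma_n-I_p)=\varphi_{A_{1:S}}(\Sigma_n-\Sigma)+\sum_{j=1}^{S}\sigma_j$, the event $\{\Delta_n^{MS+}=0\}$ rewrites as $\{\varphi_{A_{1:S}}(\Sigma-\Sigma_n)>\sum_{j=1}^{S}\sigma_j-t_{n,p}^{MS+}\}$. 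Invoking the hypothesis $\sigma\geq\frac{2(s+1)}{s}t_{n,p}^{MS+}$ together with $\sum_{j=1}^{S}\sigma_j\geq s\sigma$, the right-hand threshold is at least $2(s+1)t_{n,p}^{MS+}-t_{n,p}^{MS+}=(2s+1)t_{n,p}^{MS+}$. It then suffices to bound $\mathbb{P}_\Sigma\big(\varphi_{A_{1:S}}(\Sigma-\Sigma_n)>(2s+1)t_{n,p}^{MS+}\big)$; since $\mathcal{F}_+\subseteq\mathcal{F}$, the inequality (\ref{erreur2}) applies (in its lower-tail form for $\varphi_{A_{1:S}}(\Sigma-\Sigma_n)$, legitimate by the symmetry recorded in Theorem \ref{Theorem_subexp} and the two-sided remark after Corollary \ref{corollary_1}) and, with $w=S>1$ and $K=1/2$, it is stated at level $\tilde{t}=(2s+1)t=(2s+1)t_{n,p}^{MS+}$. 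This yields $\sup_{\Sigma\in\mathcal{F}_+}\mathbb{P}_\Sigma(\Delta_n^{MS+}=0)\leq\exp(-u/4)$.

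The genuine obstacle is making the two concentration bounds share the single threshold $t_{n,p}^{MS+}$. Under the null the deviation is governed by $\|A_{1:S}I_p\|_F,\|A_{1:S}I_p\|_\infty$, whereas under the alternative the relevant norms $\|A_{1:S}\Sigma\|_F,\|A_{1:S}\Sigma\|_\infty$ are inflated by a factor controlled by $2s+1$ (Proposition \ref{prop_norm}), which is exactly what produces $\tilde{t}=(2s+1)t$ and hence the $\frac{2(s+1)}{s}$ factor in the separation radius. The choice $K=1/2$ is what simultaneously pins down the null threshold and the recentered alternative threshold to $t_{n,p}^{MS+}$; adding the two displays then gives $R(\Delta_n^{MS+},\mathcal{F}_+)\leq 2\exp(-u/4)$. (In the boundary case $S=1$, where $s=1$ and $w=1$, the alternative threshold $\tilde{t}$ of (\ref{erreur2}) satisfies $\tilde{t}\leq(2s+1)t_{n,p}^{MS+}$, so the same tail bound applies by monotonicity.)
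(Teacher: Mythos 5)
Your proof is correct and follows essentially the same route as the paper: split the risk, apply the concentration bound (\ref{erreur1}) with $W=\{1,\dots,S\}$ for the type I error, recenter under the alternative using $\varphi_{A_{1:S}}(\Sigma)\geq s\sigma$ and the separation condition to reduce the type II error to the tail bound (\ref{erreur2}) at level $(2s+1)t_{n,p}^{MS+}$. Your explicit identification of $K=1/2$ as the calibration that makes the corollary's threshold coincide with $t_{n,p}^{MS+}$ is a detail the paper leaves implicit, but the argument is the same.
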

When the alternative hypothesis is $\mathcal{F}(s,S,\sigma)$, we consider for some $t_{n,p}^{MS}$ the test procedure
\begin{equation}
\Delta_n^{MS}=I\left(\sum_{i=1}^{S}|\varphi_{A_i}(\Sigma_n - I_p)|\geq t_{n,p}^{MS}\right). \label{test_MSG}
\end{equation}
\begin{theorem}\label{Theorem_testMS2}
	The test $\Delta_n^{MS}$ defined in (\ref{test_MSG}), with
\[
t_{n,p}^{MS}=S\max\left\{ \sqrt{\frac{4u \log(S)}{n(p-S)}},\frac{8 u \log(S)}{n(p-S)}
\right\}
\]for $u>1$ is such that $$R(\Delta_n^{MS},\mathcal{F})\leq 4\exp\left(-(u-1)\log (S)\right)$$
	 provided that $\sigma \geq t_{n,p}^{MS}+\max\left\{ \sqrt{\frac{4 (u-1) (2s+1) \log (S)}{n(p-S)}}, \frac{8 (u-1)(2s+1) \log (S)}{n(p-S)}\right\}$ .
\end{theorem}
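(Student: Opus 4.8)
The plan is to bound separately the two contributions to $R(\Delta_n^{MS},\mathcal{F})$, namely the type I error $\mathbb{P}_{I_p}(\Delta_n^{MS}=1)$ and the maximal type II error $\sup_{\Sigma\in\mathcal{F}}\mathbb{P}_\Sigma(\Delta_n^{MS}=0)$, and to show each is at most $2\exp(-(u-1)\log S)$. Throughout I would exploit that each $A_i$, $i\geq 1$, is off-diagonal, so $\varphi_{A_i}(I_p)=\mathrm{Tr}(A_i)=0$ and hence $\varphi_{A_i}(\Sigma_n-I_p)=\varphi_{A_i}(\Sigma_n-\Sigma)+\sigma_i$ for any $\Sigma\in\mathcal{T}_p$, which under $H_0$ reduces to $\varphi_{A_i}(\Sigma_n-I_p)=\varphi_{A_i}(\Sigma_n-\Sigma)$.

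For the type I error I would use a union bound over the $S$ lags. If $\sum_{i=1}^S|\varphi_{A_i}(\Sigma_n-I_p)|\geq t_{n,p}^{MS}$ then at least one lag satisfies $|\varphi_{A_i}(\Sigma_n-I_p)|\geq t_{n,p}^{MS}/S$, and by construction $t_{n,p}^{MS}/S=\max\{\sqrt{4u\log S/(n(p-S))},\,8u\log S/(n(p-S))\}$ is precisely the single-lag threshold of the two-sided version of Corollary \ref{corollary_1}, eq. (\ref{erreur1}), with $w=1$, the choice $K=1/2$ and Bernstein parameter $4u\log S$ (here one uses $\|A_i\|_F^2\leq 1/(2(p-S))$ and $\|A_i\|_\infty\leq 1/(p-S)$ from Proposition \ref{prop_norm}). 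Each lag then contributes at most $2\exp(-u\log S)=2S^{-u}$, and summing over the $S$ lags yields $\mathbb{P}_{I_p}(\Delta_n^{MS}=1)\leq 2S^{1-u}=2\exp(-(u-1)\log S)$.

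For the type II error, fix $\Sigma\in\mathcal{F}(s,S,\sigma)$ with significant set $\mathbf{S}$ and pick any $j_0\in\mathbf{S}$, so that $|\sigma_{j_0}|\geq\sigma$. On the event $\Delta_n^{MS}=0$ every summand is below the threshold, in particular $|\varphi_{A_{j_0}}(\Sigma_n-I_p)|<t_{n,p}^{MS}$; combining with the reverse triangle inequality $|\varphi_{A_{j_0}}(\Sigma_n-I_p)|\geq|\sigma_{j_0}|-|\varphi_{A_{j_0}}(\Sigma_n-\Sigma)|\geq\sigma-|\varphi_{A_{j_0}}(\Sigma_n-\Sigma)|$ gives $|\varphi_{A_{j_0}}(\Sigma_n-\Sigma)|>\sigma-t_{n,p}^{MS}$. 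The separation hypothesis on $\sigma$ makes the right-hand side at least the deviation term $\max\{\sqrt{4(u-1)(2s+1)\log S/(n(p-S))},\,8(u-1)(2s+1)\log S/(n(p-S))\}$, which is calibrated to dominate the single-lag threshold of Corollary \ref{corollary_1}, eq. (\ref{erreur2}), under $\Sigma$ (using $\|A_{j_0}\Sigma\|_F^2\leq(2s+1)/(p-S)$ and $\|A_{j_0}\Sigma\|_\infty\leq(2s+1)/(p-S)$, valid since $S<p/2$ forces $\mathcal{K}=1$). Applying the two-sided tail bound then controls $\mathbb{P}_\Sigma(\Delta_n^{MS}=0)$ by the corresponding exponential, uniformly over $\mathbf{S}$ since $j_0$ was arbitrary.

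The main obstacle is the joint calibration of the free parameter $K\in(0,1)$ and of the Bernstein parameter so that, simultaneously, the resulting single-lag threshold does not exceed the deviation term appearing in the $\sigma$-condition and the attached tail probability is as small as $2\exp(-(u-1)\log S)$. The difficulty is that the variance proxy $\|A_{j_0}\Sigma\|_F^2$ and the scale $\|A_{j_0}\Sigma\|_\infty$ enter the square-root (sub-Gaussian) and the linear (sub-exponential) branches of the bound with opposite dependence on $K$, so the two regimes impose competing constraints; reconciling them, while also accounting for the factor two from passing to absolute values and for the $(2s+1)$ inflation of the variance proxy under the alternative, is the delicate bookkeeping step. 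The $\log S$ factors in $t_{n,p}^{MS}$ and in the $\sigma$-condition are exactly what the union bound over the $S$ lags in the type I analysis demands, which is why the two-sided sum-of-absolute-values statistic is costlier by a $\log S$ factor than the signed single-functional statistic used in Theorem \ref{Theorem_testMS}.
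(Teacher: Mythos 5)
Your proposal is correct and follows essentially the same route as the paper: a union bound over the $S$ lags reduces the type I error to $S$ applications of the two-sided, $w=1$, $K=1/2$ instance of Corollary \ref{corollary_1} at level $4u\log S$, and for the type II error one restricts to a single significant lag, applies the reverse triangle inequality together with the separation condition on $\sigma$, and invokes the $w=1$ case of the corollary again. The ``delicate bookkeeping'' you flag (reconciling the sub-Gaussian and sub-exponential branches after fixing $K=1/2$ and absorbing the $(2s+1)$ inflation) is precisely the step the paper carries out, with the same constants.
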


\subsection{Highly sparse covariance structure}
Let us consider now for some threshold $t_{n,p}^{HS+}$ the test procedure
\begin{equation}\label{test_HS}
\Delta_n^{HS+}=\max_{\mathbf{S} \subseteq \{1,...,S\}, \#\mathbf{S}=s}I\left(\varphi_{\mathcal{A}_\mathbf{S}}(\Sigma_n- I_p)\geq t_{n,p}^{HS+}\right).
\end{equation}
The test $\Delta_n^{HS+}$ successively tries all possible sets $\mathbf{S}$ of $s$ diagonals among the first $S$ diagonal values. If any of these tests decides to reject $H_0$, then $\Delta_n^{HS+}$ also rejects $H_0$, otherwise $\Delta_n^{HS+}$ accepts the null hypothesis $H_0$.
\begin{theorem}\label{theorem_testHS}
	The test $\Delta_n^{HS+}$ defined in (\ref{test_HS}), with
\[
t_{n,p}^{HS+}=\max\left\{\sqrt{\frac{4u \cdot s \log{\binom{S}{s}}}{n(p-S)}},
\frac{8u \cdot s\log{\binom{S}{s}}}{n(p-S)}\right\}
\]
for $u>1$ is such that $$R(\Delta_n^{HS+},\mathcal{F}^+)\leq \exp\left(-(u-1)\log{\binom{S}{s}}\right)+\exp\left(-\frac{u}{4}\right)$$ provided that
$\sigma \geq \frac{1}{s}\left(t_{n,p}^{HS+}+(2s+1)\max\left\{\sqrt{\frac{u\cdot s}{n(p-S)}}, \frac{2u \cdot s}{n(p-S)}\right\}\right)$
\end{theorem}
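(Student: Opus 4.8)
The plan is to split the maximal testing risk $R(\Delta_n^{HS+},\mathcal{F}^+)$ into its type I contribution $\mathbb{P}_{I_p}(\Delta_n^{HS+}=1)$ and its type II contribution $\sup_{\Sigma\in\mathcal{F}^+}\mathbb{P}_\Sigma(\Delta_n^{HS+}=0)$, and to bound each by one of the two summands in the announced bound. Throughout I would take $K=1/2$ in Corollary~\ref{corollary_1}, the value that makes both the $\sqrt{\cdot}$ and the linear branches of the thresholds align exactly with $t_{n,p}^{HS+}$.

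For the type I error, observe that $\{\Delta_n^{HS+}=1\}$ is the union, over the $\binom{S}{s}$ subsets $\mathbf{S}$ of size $s$, of the events $\{\varphi_{\mathcal{A}_\mathbf{S}}(\Sigma_n-I_p)\geq t_{n,p}^{HS+}\}$. A union bound reduces the problem to a single such event, to which I apply~(\ref{erreur1}) with $w=s$. Choosing the free parameter there equal to $4u\log\binom{S}{s}$ together with $K=1/2$ makes the resulting threshold coincide with $t_{n,p}^{HS+}$ in both branches, so each event has probability at most $\exp(-u\log\binom{S}{s})$; multiplying by the $\binom{S}{s}=\exp(\log\binom{S}{s})$ terms of the union yields exactly $\exp(-(u-1)\log\binom{S}{s})$. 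The assumption $u>1$ is what keeps this exponent negative.

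For the type II error, I would keep only the true support: if $\mathbf{S}$ is the genuine support of $\Sigma\in\mathcal{F}^+(s,S,\sigma)$, then $\{\Delta_n^{HS+}=0\}\subseteq\{\varphi_{\mathcal{A}_\mathbf{S}}(\Sigma_n-I_p)<t_{n,p}^{HS+}\}$. The key is the deterministic decomposition $\varphi_{\mathcal{A}_\mathbf{S}}(\Sigma_n-I_p)=\varphi_{\mathcal{A}_\mathbf{S}}(\Sigma_n-\Sigma)+\sum_{j\in\mathbf{S}}\sigma_j$, where the diagonal-free structure of each $A_j$ gives $\mathrm{Tr}(\mathcal{A}_\mathbf{S}I_p)=0$, and where $\sum_{j\in\mathbf{S}}\sigma_j\geq s\sigma$ by definition of $\mathcal{F}^+$. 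Hence this event is contained in $\{\varphi_{\mathcal{A}_\mathbf{S}}(\Sigma-\Sigma_n)>s\sigma-t_{n,p}^{HS+}\}$. I then apply the deviation bound~(\ref{erreur2}) for $\varphi_{\mathcal{A}_\mathbf{S}}(\Sigma-\Sigma_n)$ with $w=s$, for which the relevant threshold is $(2s+1)$ times the single-diagonal one; taking the free parameter equal to $u$ and $K=1/2$, this threshold becomes $(2s+1)\max\{\sqrt{us/(n(p-S))},\,2us/(n(p-S))\}$ and the probability is at most $\exp(-u/4)$. The hypothesis $s\sigma\geq t_{n,p}^{HS+}+(2s+1)\max\{\cdots\}$ is precisely what guarantees $s\sigma-t_{n,p}^{HS+}$ exceeds that threshold, so the type II error is bounded by $\exp(-u/4)$, and summing the two contributions gives the claim.

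The main difficulty is bookkeeping rather than conceptual: one must check that the single choice $K=1/2$ simultaneously matches $t_{n,p}^{HS+}$ in the type I step (free parameter $4u\log\binom{S}{s}$) and reproduces the exact term appearing in the condition on $\sigma$ in the type II step (free parameter $u$). I would also verify the borderline case $s=1$, where $\mathcal{A}_\mathbf{S}$ reduces to a single $A_j$ and the sharper $w=1$ branch of Corollary~\ref{corollary_1} applies; since $\sqrt{2s+1}\le 2s+1$, the stated condition on $\sigma$ remains sufficient, so no separate argument is needed.
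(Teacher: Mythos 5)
Your proposal is correct and follows essentially the same route as the paper's proof: a union bound over the $\binom{S}{s}$ subsets with the Corollary~\ref{corollary_1} parameter set to $4u\log\binom{S}{s}$ for the type I error, and a reduction to the true support with the deviation bound (\ref{erreur2}) at parameter $u$ for the type II error, both with $K=1/2$. Your explicit check of the $w=1$ branch when $s=1$ is a small additional verification the paper leaves implicit, but it does not change the argument.
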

When the alternative set of hypotheses is $\mathcal{F}(s,S,\sigma)$, consider  for some threshold $t_{n,p}^{HS}>0$
\begin{equation}\label{test_HSG}
\Delta_n^{HS}=\max_{\mathbf{S} \subseteq \{1,...,S\}, \#\mathbf{S}=s}I\left(\sum_{j \in \mathbf{S}}|\varphi_{A_j}(\Sigma_n- {I_p})|\geq t_{n,p}^{HS}\right).
\end{equation}
\begin{theorem}\label{theorem_testHSG}
	The test $\Delta_n^{HS}$ defined in (\ref{test_HSG}), with
\[
t_{n,p}^{HS}=s\max\left\{  \sqrt{\frac{4u \log\left(s{\binom{S}{s}}\right)}{n(p-S)}},
\frac{8u \log\left(s{\binom{S}{s}}\right)}{n(p-S)}\right\}
\]
for $u>1$ is such that
\[R(\Delta_n^{HS},\mathcal{F})\leq 4\exp\left[-(u-1)\log\left(s{\binom{S}{s}}\right)\right]
 \] provided that $\sigma \geq t_{n,p}^{HS} + \max\left\{\sqrt{\frac{4 (u-1) \log\left(s(2s+1){\binom{S}{s}}\right)}{n(p-S)}}, \frac{8 (u-1) \log\left(s(2s+1){\binom{S}{s}}\right)}{n(p-S)}\right\}$.
\end{theorem}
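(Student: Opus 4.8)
The plan is to bound the maximal testing risk $R(\Delta_n^{HS},\mathcal{F})=\mathbb{P}_{I_p}(\Delta_n^{HS}=1)+\sup_{\Sigma\in\mathcal{F}}\mathbb{P}_\Sigma(\Delta_n^{HS}=0)$ by treating the type I and type II errors separately and showing that each is at most $2\exp(-(u-1)\log(s\binom{S}{s}))$. The argument parallels the one-sided highly sparse case (Theorem \ref{theorem_testHS}), the genuinely new ingredient being the control of the sums of absolute values $\sum_{j\in\mathbf{S}}|\varphi_{A_j}(\Sigma_n-I_p)|$ rather than of a single linear functional.

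For the type I error I would write $\{\Delta_n^{HS}=1\}=\bigcup_{\mathbf{S}}\{\sum_{j\in\mathbf{S}}|\varphi_{A_j}(\Sigma_n-I_p)|\geq t_{n,p}^{HS}\}$ and apply a union bound over the $\binom{S}{s}$ candidate subsets $\mathbf{S}$. Within a fixed subset, if the sum of the $s$ absolute values reaches $t_{n,p}^{HS}$ then at least one term reaches $t_{n,p}^{HS}/s$, so a second union bound over the $s$ lags reduces everything to single-lag deviations $\mathbb{P}_{I_p}(|\varphi_{A_j}(\Sigma_n-I_p)|\geq t_{n,p}^{HS}/s)$. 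These are governed by the two-sided form of \eqref{erreur1} in Corollary \ref{corollary_1} with $w=1$; calibrating the free parameter there to $4u\log(s\binom{S}{s})$ and taking $K=\tfrac12$ makes $t_{n,p}^{HS}/s$ coincide exactly with the threshold in \eqref{erreur1}, giving a per-lag bound $2\exp(-u\log(s\binom{S}{s}))$. The two counting factors $\binom{S}{s}\cdot s=s\binom{S}{s}=\exp(\log(s\binom{S}{s}))$ then collapse the estimate to $2\exp(-(u-1)\log(s\binom{S}{s}))$.

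For the type II error, fix $\Sigma\in\mathcal{F}(s,S,\sigma)$ with true support $\mathbf{S}$. Since $\Delta_n^{HS}=0$ forces every subset test to fail, in particular the test on the true $\mathbf{S}$ fails, whence $\{\Delta_n^{HS}=0\}\subseteq\{\sum_{j\in\mathbf{S}}|\varphi_{A_j}(\Sigma_n-I_p)|< t_{n,p}^{HS}\}$. Writing $\varphi_{A_j}(\Sigma_n-I_p)=\varphi_{A_j}(\Sigma_n-\Sigma)+\sigma_j$ and using the reverse triangle inequality together with $|\sigma_j|\geq\sigma$ for $j\in\mathbf{S}$, this event is contained in $\{\sum_{j\in\mathbf{S}}|\varphi_{A_j}(\Sigma_n-\Sigma)|> s\sigma - t_{n,p}^{HS}\}$. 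From here I would either split the deviation among the $s$ lags, reducing to single-lag deviations $\mathbb{P}_\Sigma(|\varphi_{A_j}(\Sigma_n-\Sigma)|>(s\sigma-t_{n,p}^{HS})/s)$ controlled by the two-sided form of \eqref{erreur2} with $w=1$, or, mimicking the one-sided proof, linearise the sum through the sign pattern $B=\sum_{j\in\mathbf{S}}\mathrm{sign}(\sigma_j)\,A_j$ into the single functional $\varphi_{B}(\Sigma-\Sigma_n)$ and invoke \eqref{erreur2} with $w=s$ via Theorem \ref{Theorem_subexp}. The separation hypothesis $\sigma\geq t_{n,p}^{HS}+M$, with $M$ the stated maximum, is precisely what guarantees that the remaining margin $s\sigma-t_{n,p}^{HS}$ exceeds the Bernstein threshold produced by Corollary \ref{corollary_1}; the $(2s+1)$ factor carried by the Frobenius and operator norms of Proposition \ref{prop_norm} under $\Sigma$ is what forces the cardinality $s(2s+1)\binom{S}{s}$ inside the logarithm in $M$. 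Assembling the pieces yields $\sup_\Sigma\mathbb{P}_\Sigma(\Delta_n^{HS}=0)\leq 2\exp(-(u-1)\log(s\binom{S}{s}))$, and adding the two bounds gives the claimed $4\exp(-(u-1)\log(s\binom{S}{s}))$.

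The hard part will be the type II step, exactly because of the absolute values. A naive per-lag union bound pays the full $(2s+1)$ sub-exponential scale of each $\varphi_{A_j}(\Sigma_n-\Sigma)$ under $\Sigma$ coming from Proposition \ref{prop_norm}, and this threatens to degrade the exponent below the target $(u-1)\log(s\binom{S}{s})$. The delicate bookkeeping is therefore to show that the separation margin $s\sigma-t_{n,p}^{HS}$, once distributed across the support lags, still dominates the per-lag (or aggregated signed-functional) threshold at the level dictated by the union cardinality $s(2s+1)\binom{S}{s}$, so that the two counting factors and the variance factor assemble cleanly into the single exponent $(u-1)\log(s\binom{S}{s})$. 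Matching the constants $4$ and $8$ appearing in $t_{n,p}^{HS}$ and in $M$ with those generated by \eqref{erreur1} and \eqref{erreur2} under the choice $K=\tfrac12$ is where the calibration must be carried out with care.
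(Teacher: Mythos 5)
Your type I bound is exactly the paper's: a union bound over the $\binom{S}{s}$ candidate subsets, a second union bound over the $s$ lags inside each subset, a per-lag two-sided Bernstein bound calibrated at level $2\exp(-u\log(s\binom{S}{s}))$ via Corollary \ref{corollary_1} with $w=1$, and the counting factor $s\binom{S}{s}$ absorbing one power of the exponent. For the type II error you and the paper share the first step, $\{\Delta_n^{HS}=0\}\subseteq\{\sum_{j\in\mathbf{S}}|\varphi_{A_j}(\Sigma_n-I_p)|\leq t_{n,p}^{HS}\}$ for the true support $\mathbf{S}$, but then diverge: you keep the whole sum, pass to $\{\sum_{j\in\mathbf{S}}|\varphi_{A_j}(\Sigma_n-\Sigma)|>s\sigma-t_{n,p}^{HS}\}$, and propose either to distribute the margin over the $s$ lags (paying a union-bound factor $s$) or to aggregate through the sign pattern. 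The paper instead notes that a sum of nonnegative terms being at most $t_{n,p}^{HS}$ forces \emph{every individual term} to be at most $t_{n,p}^{HS}$, fixes a single lag $i\in\mathbf{S}$, deduces $|\varphi_{A_i}(\Sigma_n-\Sigma)|\geq\sigma-t_{n,p}^{HS}$, and bounds one probability --- a supremum over $i$, with no union bound at all on this side. This is precisely why the ``hard part'' you flag does not arise in the paper's proof: the separation condition only needs to push the single-lag margin $\sigma-t_{n,p}^{HS}$ past the $w=1$ threshold of Corollary \ref{corollary_1}, whose $(2s+1)$ variance factor is what gets folded into the $\log(s(2s+1)\binom{S}{s})$ of the hypothesis. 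Your per-lag variant is workable in spirit (your margin $\sigma-t_{n,p}^{HS}/s$ is even slightly larger), but the extra factor $s$ from your union bound must be cancelled by $(u-1)\log(2s+1)$, which fails for $u$ close to $1$; the paper's single-lag reduction is both simpler and tighter, so you should adopt it rather than fight that bookkeeping.
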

\begin{remark}
{\rm	When the separation is measured by $\max_{\mathbf{S}}\sum_{j\in \mathbf{S}}\sigma_j$, its estimator is known as the scan statistic. Note that the computations are not very involved. Indeed, after computing $\xi_1=\varphi_{A_1}(\Sigma_n- {I_p}),...,\xi_S=\varphi_{A_S}(\Sigma_n-{I_p})$, we sort these values in decreasing order : $\xi_{(1)}\geq\xi_{(2)}\geq...\geq \xi_{(S)}$, and then $$\max_{\mathbf{S} \subseteq \{1,...,S\}, \#\mathbf{S}=s}\sum_{j \in \mathbf{S}}\varphi_{A_j}(\Sigma_n- {I_p})=\xi_{(1)}+...+\xi_{(s)}$$
	Similar calculations hold for $\max_{\mathbf{S}}\sum_{j\in \mathbf{S}}|\sigma_j|$ and $|\xi|_{(1)}\geq|\xi|_{(2)}\geq...\geq |\xi|_{(S)}$. We thus exploit the Toeplitz structure that reduces the matrix structure to a vector and makes the scan statistic computationally efficient.}
\end{remark}
\begin{remark}
Note that the previous tests must be agregated over a set of possibel values for $s$ in order to be free of the sparsity $s$: $\tilde \Delta_n^{HS} = \max_s \Delta_n^{HS}$ will reject whever at least one test rejects.
\end{remark}

\vspace{-.5cm}

{\bf Discussion}
a) If $S \asymp \log(p) $, giving $p-S \asymp p$, the series has short memory. We get $t_{np}^{MS+}\asymp \sqrt{{\log(p)}/{(np)}}$ giving a test rate smaller than $\sqrt{{\log(p)}/{(np)}}$, and  with Stirling's approximation, $t_{np}^{HS+}\asymp s\sqrt{{\log\left(\frac{\log(p)}{s}\right)}/{(np)}}$ giving the following bound for the testing rate 
$
\sqrt{\frac{\log\left({\log(p)}/{s}\right)}{np}}+ \sqrt{\frac{s}{np}}$. 

We see that $\Delta_n^{HS+}$ detects smaller values of $\sigma$ than $\Delta_n^{MS+}$ when $s\leq \log(p)$, hence our choice to name the procedures $MS$ and $HS$ respectively.

b) If the stationary time series has longer memory, for example $S = {p}/{2}-1$, this gives  $p-S = {p}/{2}+1$ and $\frac{S}{p-S}\asymp 1$. In this case, $t_{np}^{MS+}\asymp {1}/{\sqrt{n}}$ and $\sigma \geq {1}/{\sqrt{n}}$, while $t_{np}^{HS+} \asymp s\sqrt{\frac{\log(p/s)}{np}}+ \sqrt{\frac{s}{np}}$.\\
Again, if $s/p \rightarrow 0$, the test $\Delta_{n}^{HS+}$ detects smaller values of $\sigma$ then $\Delta_{n}^{MS+}$. However, if $s=S\asymp \frac{p}{2}$, it is sufficient to use only $\Delta_{n}^{MS+}$.

Table~\ref{tablerates} summarizes our results where $C_1,\, C_2, \, C^*_1$ and $C^*_2$ denote constants depending only on $u$.

\vspace{-1cm}
\begin{table}[h!]
\caption{Thresholds $t$ and separation rates for moderately and highly sparse tests}
\begin{center} \label{tablerates}
 \begin{tabular}{|m{4mm}|m{7cm}|m{8cm}|}
 	\hline
 	\multicolumn{3}{|c|}{One sided test}\\
 	\hline
 	&$MS+$&$HS+$\\
 	\hline
 	 $t=$&$\max\left\{C_1\sqrt{\frac{S}{n(p-S)}}\},C_2\frac{S}{n(p-S)}\right\}$ 
 &$\max\left\{C_1\sqrt{\frac{s\log{\binom{S}{s}}}{n(p-S)}},C_2\frac{s\log{\binom{S}{s}}}{n(p-S)}\right\}$ \\
 	\hline
 	$\sigma\geq$&$\frac{2(s+1)}{s}t$&$\frac{t}{s}+ \frac{2s+1}s \max\left\{C_1\sqrt{\frac{s}{n(p-S)}},C_2\frac{s}{n(p-S)}\right\}$\\
 	\hline 
 	\end{tabular} \\
 	
 	\vspace{2em}
 	
 	\begin{tabular}{|m{4mm}|m{7cm}|m{8cm}|}
 	\hline
 	\multicolumn{3}{|c|}{Two sided test}\\
 	\hline
 	 &$MS$&$HS$\\
 	 \hline
 	  $t=$&$C\max\left\{C_1\sqrt{\frac{\log(S)}{n(p-S)}}\},C_2\frac{\log(S)}{n(p-S)}\right\}$
 &$s\max\left\{C_1\sqrt{\frac{\log\left(s{\binom{S}{s}}\right)}{n(p-S)}},C_2\frac{\log\left(s{\binom{S}{s}}\right)}{n(p-S)}\right\}$\\
 	  \hline
 	  $\sigma\geq$&$t+\max\left\{C^*_1\sqrt{\frac{(2s+1)\log(S)}{n(p-S)}},C^*_2\frac{(2s+1)\log(S)}{n(p-S)}\right\}$
 &$t+\max\left\{C^*_1\sqrt{\frac{\log\left(s(2s+1){\binom{S}{s}}\right)}{n(p-S)}},C^*_2\frac{\log\left(s(2s+1){\binom{S}{s}}\right)}{n(p-S)}\right\}$\\
 	  \hline
 \end{tabular}
 \end{center}
 \end{table}
 \vspace{-1cm}

{\bf Experimental results} A more detailed numerical study is included in the Section~5 Simulation results, including an example
of a sparse $MA(\lfloor p/4 \rfloor)$ series with increasing $p$.  We want to give a fast glimpse of the graphs of the power function, $\mathbb{E}_\Sigma(\Delta_n = 1)$, for the tests $\Delta_n^{MS}$ and $\Delta_n^{HS}$, for different values of $\Sigma$. Here $S=\sqrt{p}$ and $s= (S-1)/2$. Figures\footnote{All figures should be printed in color} ~\ref{fig:MS_main} and ~\ref{fig:HS_main}  show the power for different values of $p$ and $n$ as function of $\sum_{j=1}^S |\sigma_j|$ and $\sum_{j \in \mathbf{S}}|\sigma_j|$ - in a logarithmic scale that allow to better read this graphics. The plots show very steep power functions, that indicate a narrow band where the decision is hard to make. The power goes from small values near $\alpha=10\%$ to high values close to 1 in a fast increasing way. There are little differences in the behaviour of moderately and highly sparse tests.

We note an improvement as $p$ grows (the tests detect matrices closer to the identity), in agreement with theoretical rates that first indicated that $p$ is not a nuisance parameter here.
\begin{figure}[H]
\centering
\begin{minipage}{.32\textwidth}
\centering
\hspace*{\stretch{1}}%
\subfloat[$n=100$]{\includegraphics[width=4.5cm,height=4.5cm]{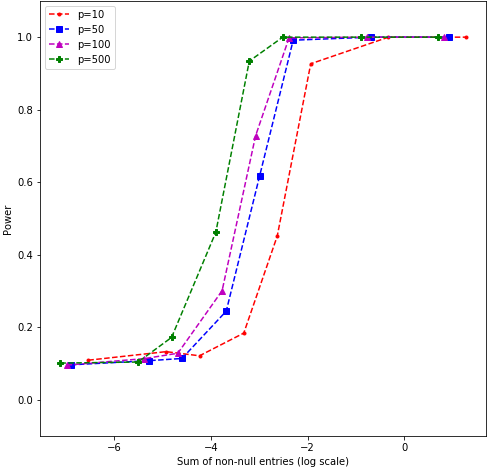}}%
\hspace{\stretch{1}}
\end{minipage}
\begin{minipage}{.32\textwidth}
\centering
\hspace*{\stretch{1}}%
\subfloat[$n=500$]{\includegraphics[width=4.5cm,height=4.5cm]{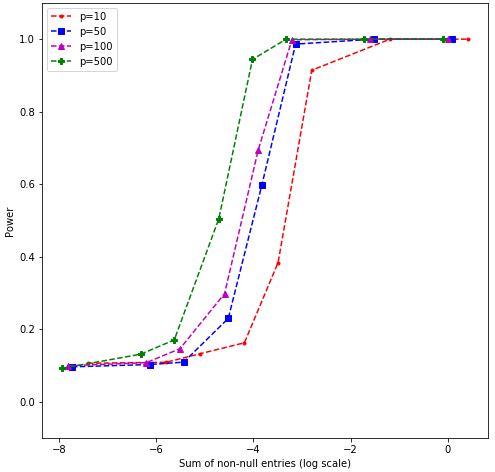}}%
\hspace{\stretch{1}}
\end{minipage}
\begin{minipage}{.32\textwidth}
\centering
\hspace*{\stretch{1}}%
\subfloat[$n=1000$]{\includegraphics[width=4.5cm,height=4.5cm]{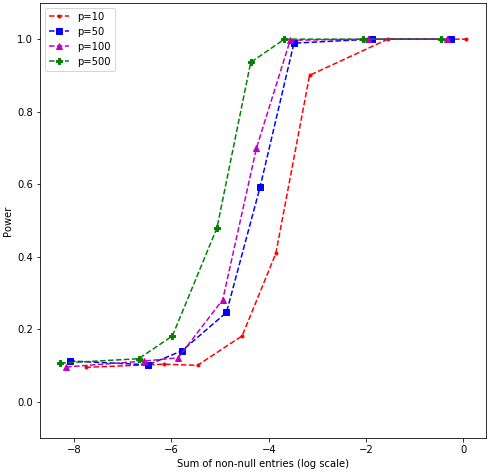}}%
\hspace{\stretch{1}}
\end{minipage}
\caption{Power of the $\Delta^{MS}_n$ test}
\label{fig:MS_main}
\end{figure}

\vspace{-1cm}

\begin{figure}[H]
\centering
\begin{minipage}{.32\textwidth}
\centering
\hspace*{\stretch{1}}%
\subfloat[$n=100$]{\includegraphics[width=4.5cm,height=4.5cm]{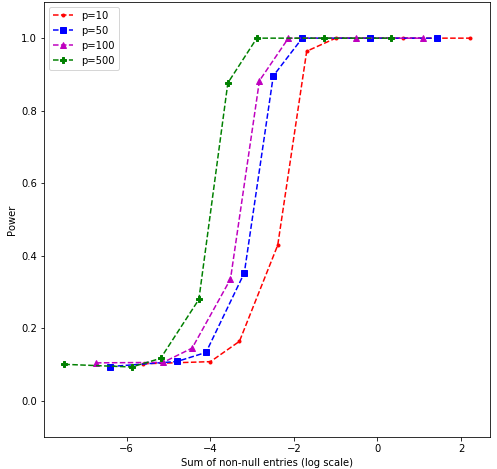}}%
\hspace{\stretch{1}}
\end{minipage}
\begin{minipage}{.32\textwidth}
\centering
\hspace*{\stretch{1}}%
\subfloat[$n=500$]{\includegraphics[width=4.5cm,height=4.5cm]{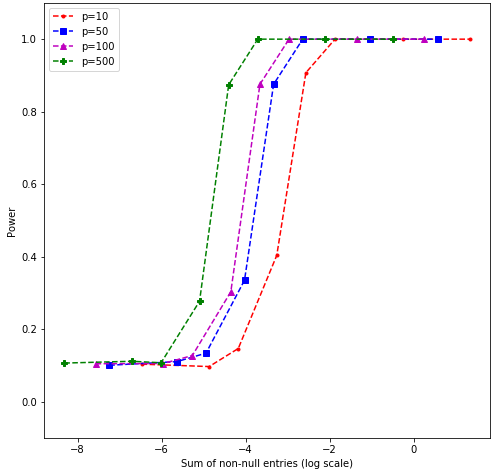}}%
\hspace{\stretch{1}}
\end{minipage}
\begin{minipage}{.32\textwidth}
\centering
\hspace*{\stretch{1}}%
\subfloat[$n=1000$]{\includegraphics[width=4.5cm,height=4.5cm]{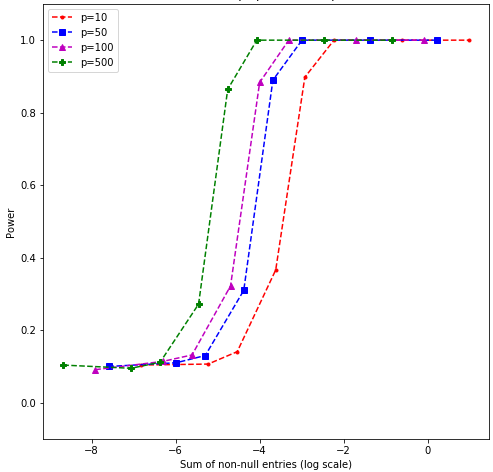}}%
\hspace{\stretch{1}}
\end{minipage}
\caption{Power of the $\Delta^{HS}_n$ test}
\label{fig:HS_main}
\end{figure}

\section{Lag-selection for stationary time-series}
 The objective here is to properly select non-null correlation coefficients. We define a (two-sided) lag-selection problem as estimation of $\eta$, a vector with entries $\eta_{j}=\mathbbm{1}(|\varphi_{A_j}\left(\Sigma\right)|>0)$. We want to find a selector $\hat \eta$ with $\hat{\eta_{j}}=\mathbbm{1}(|\varphi_{A_j}\left(\Sigma_n\right)|>\tau_n)$ that is consistent in the sense that the risk
\begin{equation*}
R^{LS}(\hat{\eta},\mathcal{F})=\sum_{j=1}^{S}\mathbb{E}_{\Sigma}[|\hat{\eta_{j}}-\eta_{j}|]
\end{equation*}
stays bounded (is small). The Hamming loss counts the number of miss-classified elements.
\begin{theorem}\label{theorem_select}
If $\Sigma$ belongs to $\mathcal{F}(s,S,\sigma)$, with $\sigma \geq 2 \tau_n$, the selector $\hat \eta$ with	 \[
\tau_n=\max\left\{\left(\sqrt{\log(s)}+\sqrt{\log(S-s)}\right)\sqrt{u\frac{2s+1}{n(p-S)}},
2u \log(s(S-s))\frac{2s+1}{n(p-S)}
\right\}
\] for $u>1$ is such that
	\begin{equation*}
	R_{LS}(\hat \eta,\mathcal{F})\leq 2\exp\left(-(u-1)\frac{\log(s)}{4}\right) + 2\exp\left(-(u-1)\frac{\log(S-s)}{4} \right).
	\end{equation*}
\end{theorem}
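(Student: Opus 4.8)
The plan is to split the Hamming risk coordinate by coordinate according to whether a lag belongs to the true support $\mathbf{S}$ or not, and to control each of the two resulting sums by the concentration inequality (\ref{erreur2}) of Corollary~\ref{corollary_1} applied with $w=1$. For $\Sigma\in\mathcal{F}(s,S,\sigma)$ we have $\eta_j=\mathbbm{1}(|\varphi_{A_j}(\Sigma)|>0)=1$ exactly on the $s$ lags of $\mathbf{S}$ and $\eta_j=0$ on the $S-s$ lags of $\{1,\dots,S\}\setminus\mathbf{S}$, so that
\[
R_{LS}(\hat\eta,\mathcal{F})=\sum_{j\in\mathbf{S}}\mathbb{P}_\Sigma(\hat\eta_j=0)+\sum_{j\in\{1,\dots,S\}\setminus\mathbf{S}}\mathbb{P}_\Sigma(\hat\eta_j=1),
\]
the first sum collecting the $s$ false negatives and the second the $S-s$ false positives.

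For a false negative at $j\in\mathbf{S}$ I would exploit the separation assumption $\sigma\geq 2\tau_n$: since $|\sigma_j|=|\varphi_{A_j}(\Sigma)|\geq\sigma\geq 2\tau_n$, the event $\{|\varphi_{A_j}(\Sigma_n)|\leq\tau_n\}$ forces, by the reverse triangle inequality, $|\varphi_{A_j}(\Sigma_n-\Sigma)|\geq|\sigma_j|-\tau_n\geq\tau_n$, and tracking the sign of $\sigma_j$ shows this is in fact a one-sided deviation $\varphi_{A_j}(\Sigma-\Sigma_n)\geq\tau_n$ (or $\varphi_{A_j}(\Sigma_n-\Sigma)\geq\tau_n$). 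Each such probability is bounded by (\ref{erreur2}) with $w=1$, namely by $\exp(-v/4)$ as soon as $\tau_n$ dominates the associated threshold $\tilde t$ at level $v$. For a false positive at $j\notin\mathbf{S}$ one has $\sigma_j=0$, hence $\varphi_{A_j}(\Sigma_n)=\varphi_{A_j}(\Sigma_n-\Sigma)$ and $\{\hat\eta_j=1\}=\{|\varphi_{A_j}(\Sigma_n-\Sigma)|>\tau_n\}$, so the two-sided form of (\ref{erreur2}), carrying the extra factor two, applies directly.

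It then remains to calibrate $\tau_n$ so that, after summation, the prefactor $s$ in front of the false negatives and the prefactor $S-s$ in front of the false positives are absorbed into the stated exponents. The idea is to match the two regimes of the Bernstein bound to the two terms of the maximum defining $\tau_n$: the sub-Gaussian threshold $\sqrt{v/(1-K)}\,\sqrt{(2s+1)/(n(p-S))}$ is matched to $(\sqrt{\log s}+\sqrt{\log(S-s)})\sqrt{u(2s+1)/(n(p-S))}$, and the sub-exponential threshold $(v/K)(2s+1)/(n(p-S))$ to $2u\log(s(S-s))(2s+1)/(n(p-S))$, for a suitably fixed $K\in(0,1)$. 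The $\sqrt{\log s}$ (resp.\ $\sqrt{\log(S-s)}$) piece is exactly what lets $s\exp(-v/4)$ (resp.\ $(S-s)\exp(-v/4)$) collapse onto $\exp(-(u-1)\log(s)/4)$ (resp.\ $\exp(-(u-1)\log(S-s)/4)$), while combining the one-sided false-negative bound with the two-sided false-positive bound accounts for the two leading factors $2$.

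The main obstacle I anticipate is precisely this calibration step: one must check that a single threshold $\tau_n$ simultaneously dominates the level-$v$ Bernstein threshold in both the sub-Gaussian and sub-exponential regimes and for both the $s$ and the $S-s$ sums, while choosing the level $v$ and the free parameter $K$ so that the logarithmic factors built into $\tau_n$ cancel the union-bound prefactors $s$ and $S-s$ and leave the clean $(u-1)/4$ decay. Everything else---the risk decomposition, the reduction of a false negative to a one-sided deviation via $\sigma\geq 2\tau_n$, and the reduction of a false positive to a two-sided deviation via $\sigma_j=0$---is routine once Corollary~\ref{corollary_1} is available.
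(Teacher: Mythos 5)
Your proposal matches the paper's proof essentially step for step: the same decomposition of the Hamming risk into false negatives over $\mathbf{S}$ and false positives over its complement, the same reduction of a false negative to a deviation of size $\sigma-\tau_n\geq\tau_n$ via the triangle inequality and $\sigma\geq 2\tau_n$, the same application of the $w=1$ concentration bound at levels $u\log(s)$ and $u\log(S-s)$ respectively, and the same calibration of $\tau_n$ to dominate both Bernstein regimes so the union-bound prefactors are absorbed. The calibration you flag as the main obstacle is indeed the only nontrivial verification, and it goes through exactly as you describe since each piece of the maximum defining $\tau_n$ dominates the corresponding level-specific threshold for both sums.
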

\begin{remark} {\rm
If we only consider the class $\mathcal{F}^+$, with  $\sigma>2 \tau_n$, we define a one-sided selection by $\eta^+_{j}=\mathbbm{1}(\varphi_{A_j}\left(\Sigma\right) > 0)$ and consider $\hat{\eta_{j}}^+ =\mathbbm{1}( \varphi_{A_j}\left(\Sigma_n\right) >\tau_n)$. Then
	\begin{equation*}
	R_{LS}(\hat \eta^+,\mathcal{F})\leq \exp\left(-(u-1)\frac{\log(s)}{4}\right) + \exp\left(-(u-1)\frac{\log(S-s)}{4} \right).
	\end{equation*}
}
\end{remark}
Take for example $S = \frac p2 - 1$, and assume that $s/p = p^{-\beta}$ for some $\beta$ in (0,1). This implies that $\log(S-s) \sim (1-\beta) \log(p)$ and the asymptotic value of $\tau_n$ as $p$ tends to infinity is
$$
\tau_n \sim (1 + \sqrt{1-\beta}) \sqrt{2 u \frac{\log(p)}{n p^{\beta}} }, \quad u>1 .
$$
Figure ~\ref{fig:LS} shows the good behaviour of our lag selector under $\Sigma \in \mathcal{F}(s, S, \sigma)$ hypothesis.
We plot the Hamming loss between $\eta$ and $\hat{\eta}$, averaged over 1000 repetitions, as a function of $n$, for numerous values of $p$ and taking $S = \sqrt{p}$. We note the fast decrease to 0 of the Hamming loss for both for $s=S-1$ and for $s=(S-1)/2$, despite the small values of $\sigma \asymp \tau_n $ to detect.

\vspace{-.8cm}

\begin{figure}[H]
\centering
\begin{minipage}{.49\textwidth}
\centering
\hspace*{\stretch{1}}%
\subfloat[$s = S-1$]{\includegraphics[width=4.5cm,height=4.5cm]{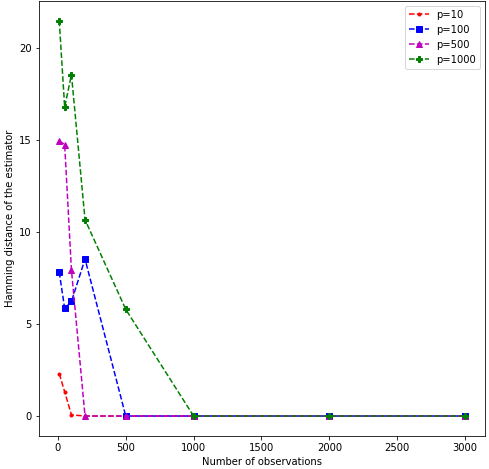}}%
\hspace{\stretch{1}}
\end{minipage}
\begin{minipage}{.49\textwidth}
\centering
\hspace*{\stretch{1}}%
\subfloat[$s = \frac{S-1}{2}$]{\includegraphics[width=4.5cm,height=4.5cm]{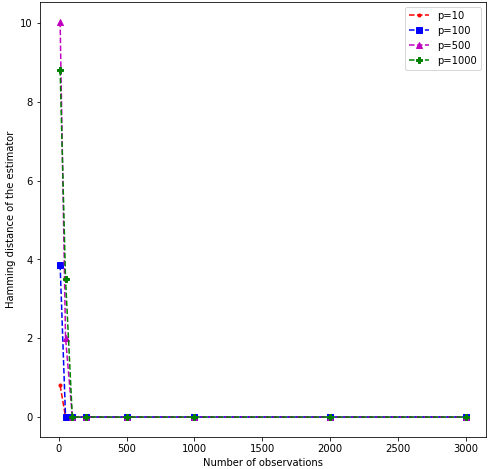}}%
\hspace{\stretch{1}}
\end{minipage}
\caption{Hamming-loss of the lag selector}
\label{fig:LS}
\end{figure}

\vspace{-1cm}

\section{Simulation results}

\subsection{Power curves of our test procedures}
We include several examples to illustrate the numerical behavior of our test procedures. First, we highlight that the plots will be drawn with a logarithmic scale. We estimate the power of the four test procedures: $\Delta_n^{MS+}$, $\Delta_n^{MS}$, $\Delta_n^{HS+}$, $\Delta_n^{HS}$ to test the null hypothesis $\Sigma = I$.

We choose the numbers of non-null entries $s$ and the non-null entries support $\textbf{S} \subset \llbracket 1; S \rrbracket$ with
\[ s = {(S-1)}/{2}, \quad \text{ and }
S=\sqrt{p}.
 \]
The location of the non zero entries is randomly chosen. We define the common value of non-null entries as growing fractions of $\sigma$. The threshold of the test procedure is defined as $t = t_{n, p, \alpha}$ the empirical $(1-\alpha)$-quantile of the test statistic under the null hypothesis. In order to determine its value empirically, we generate 5000 repeated samples under the null hypothesis. The plots represent the power of the tests by the measure of separation, namely:
\[\sum \limits_{j=1}^S \sigma_j, \text{ for the one sided tests, and }
\sum \limits_{j=1}^S |\sigma_j|, \text{ for the two-sided tests.}
 \]
 To generate the plots, we sample 5000 times under the alternative hypothesis and plot the mean value of the power of the tests. The $\alpha$ value will always be $0.1$.
\begin{figure}[H]
\centering
\begin{minipage}{.49\textwidth}
\centering
\hspace*{\stretch{1}}%
\subfloat[Logarithmic scale]{\includegraphics[width=4.5cm,height=4.5cm]{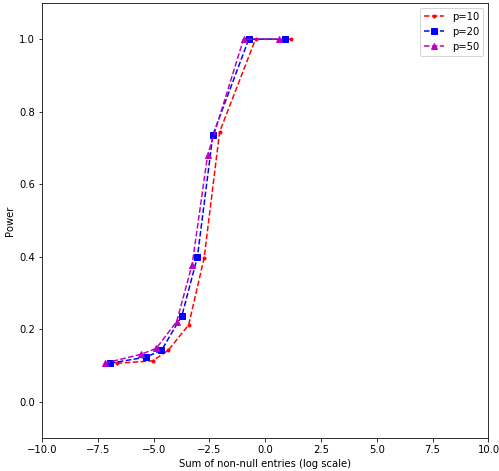}}%
\hspace{\stretch{1}}
\end{minipage}
\begin{minipage}{.49\textwidth}
\centering
\hspace*{\stretch{1}}%
\subfloat[Identity scale]{\includegraphics[width=4.5cm,height=4.5cm]{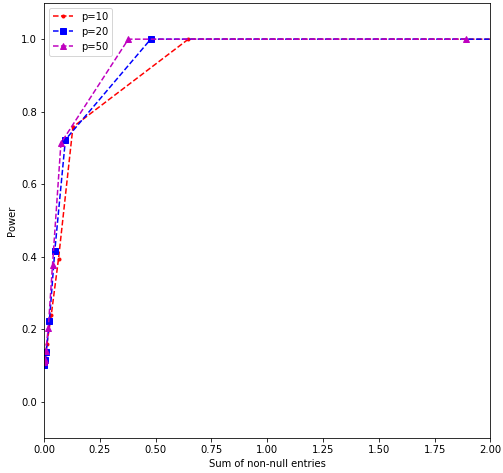}}%
\hspace{\stretch{1}}
\end{minipage}
\caption{Impact of the $x$-axis scale ($\Delta^{MS+}_n$ test)}
\label{fig:log_scale}
\end{figure}

Figure ~\ref{fig:log_scale} shows that the logarithmic scale should be preferred as it helps to better understand the behaviour of the test procedure when the measure of separation increases.

We represent now the power of the $\Delta^{MS+}_n$ test procedure as a function of the measure of separation for numerous values of $n$ and $p$. The best power function goes the fastest from low values above $\alpha = 0.1$ to high values close to 1. The change happens around the theoretical value of the  separation rate.

\begin{figure}[H]
\centering
\begin{minipage}{.32\textwidth}
\centering
\hspace*{\stretch{1}}%
\subfloat[$n=100$]{\includegraphics[width=4.5cm,height=4.5cm]{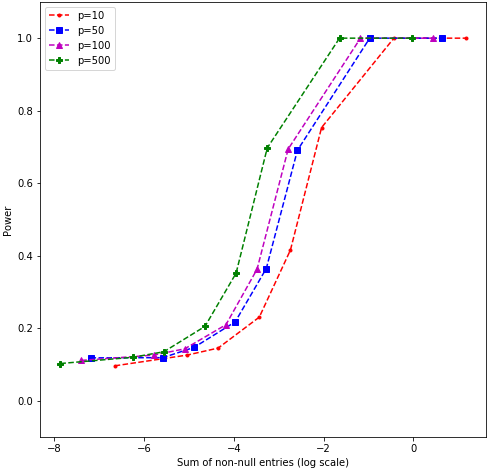}}%
\hspace{\stretch{1}}
\end{minipage}
\begin{minipage}{.32\textwidth}
\centering
\hspace*{\stretch{1}}%
\subfloat[$n=500$]{\includegraphics[width=4.5cm,height=4.5cm]{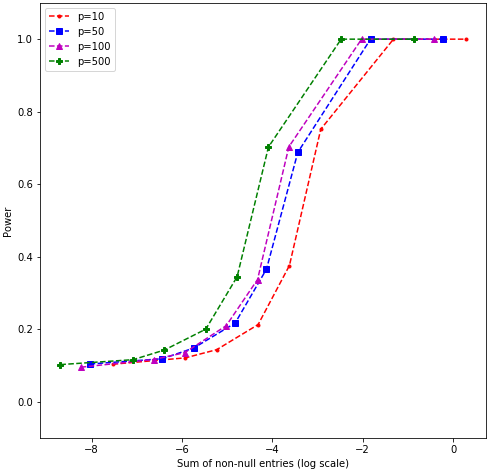}}%
\hspace{\stretch{1}}
\end{minipage}
\begin{minipage}{.32\textwidth}
\centering
\hspace*{\stretch{1}}%
\subfloat[$n=1000$]{\includegraphics[width=4.5cm,height=4.5cm]{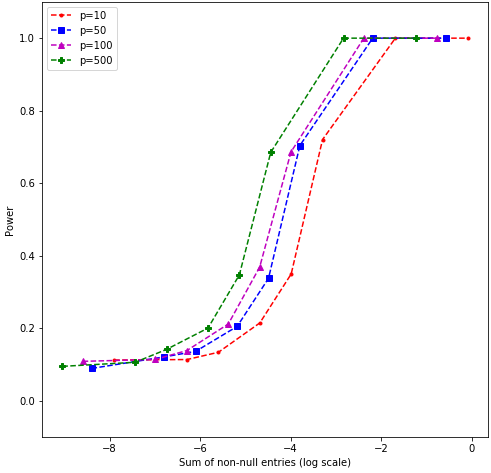}}%
\hspace{\stretch{1}}
\end{minipage}
\caption{Power of the $\Delta^{MS+}_n$ test}
\label{fig:MS+}
\end{figure}

Figure ~\ref{fig:MS+} shows that for $p$ smaller than, equal to or bigger than $n$, the $\Delta^{MS+}_n$ test presents similar behaviour as the measure of separation increases. However, it can be noticed that the performances are better in high dimension, that is the power curves are shifted to the left. This is in agreement with our theoretical rates and indicates that $p$ is not a nuisance parameter. The $\Delta^{MS+}_n$ test is not only robust but also more efficient in high dimension.

Let us consider the two-sided $\Delta^{MS}_n$ test and plot its estimated power curve.

\begin{figure}[H]
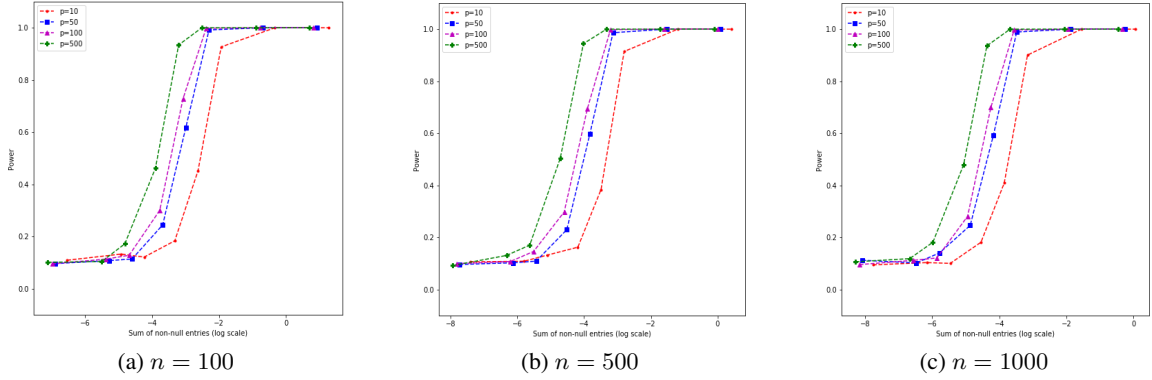

\centering
\begin{minipage}{.32\textwidth}
\centering
\hspace*{\stretch{1}}%
\subfloat[$n=100$]{\includegraphics[width=4.5cm,height=4.5cm]{Pictures/MS_n=100.png}}%
\hspace{\stretch{1}}
\end{minipage}
\begin{minipage}{.32\textwidth}
\centering
\hspace*{\stretch{1}}%
\subfloat[$n=500$]{\includegraphics[width=4.5cm,height=4.5cm]{Pictures/MS_n=500.png}}%
\hspace{\stretch{1}}
\end{minipage}
\begin{minipage}{.32\textwidth}
\centering
\hspace*{\stretch{1}}%
\subfloat[$n=1000$]{\includegraphics[width=4.5cm,height=4.5cm]{Pictures/MS_n=1000.png}}%
\hspace{\stretch{1}}
\end{minipage}
\caption{Power of the $\Delta^{MS}_n$ test}
\label{fig:MS}
\end{figure}

Figure ~\ref{fig:MS} shows that the $\Delta^{MS}_n$ test shows a similar behaviour as the $\Delta^{MS+}_n$ test. However, the two-sided test efficiency benefits more from the high-dimension $p$ than the one-sided version, in the sense that the curves shift more to the left, towards the small values of the measure of separation when $p$ is large.

Let us consider the $\Delta^{HS+}_n$ test.

\begin{figure}[H]
\centering
\begin{minipage}{.32\textwidth}
\centering
\hspace*{\stretch{1}}%
\subfloat[$n=100$]{\includegraphics[width=4.5cm,height=4.5cm]{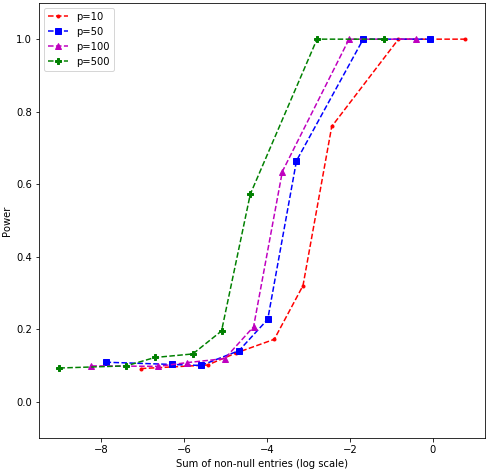}}%
\hspace{\stretch{1}}
\end{minipage}
\begin{minipage}{.32\textwidth}
\centering
\hspace*{\stretch{1}}%
\subfloat[$n=500$]{\includegraphics[width=4.5cm,height=4.5cm]{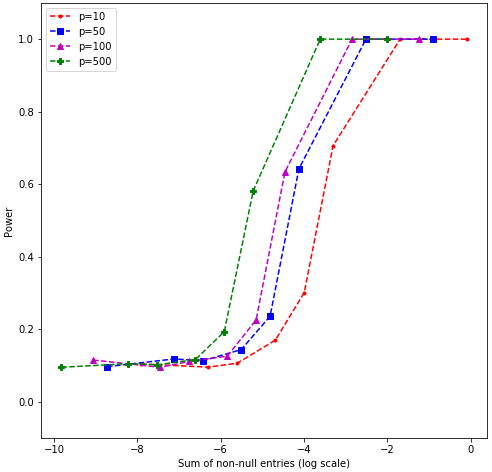}}%
\hspace{\stretch{1}}
\end{minipage}
\begin{minipage}{.32\textwidth}
\centering
\hspace*{\stretch{1}}%
\subfloat[$n=1000$]{\includegraphics[width=4.5cm,height=4.5cm]{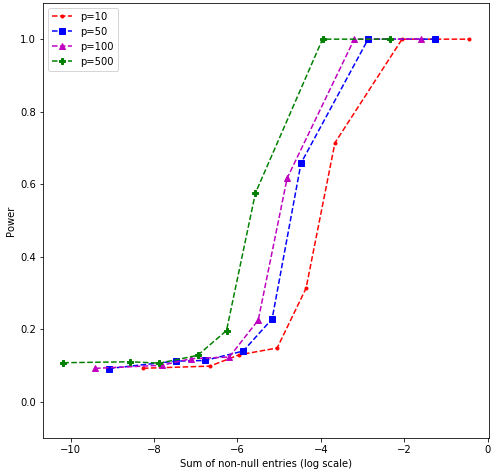}}%
\hspace{\stretch{1}}
\end{minipage}
\caption{Power of the $\Delta^{HS+}_n$ test}
\label{fig:HS+}
\end{figure}

Figure ~\ref{fig:HS+} shows that the $\Delta^{HS+}_n$ test behaves similarly to the $\Delta^{MS+}_n$ and $\Delta^{MS}_n$ tests.

Finally, we consider the two-sided $HS$ test.

\begin{figure}[H]
\centering
\begin{minipage}{.32\textwidth}
\centering
\hspace*{\stretch{1}}%
\subfloat[$n=100$]{\includegraphics[width=4.5cm,height=4.5cm]{Pictures/HS_n=100.png}}%
\hspace{\stretch{1}}
\end{minipage}
\begin{minipage}{.32\textwidth}
\centering
\hspace*{\stretch{1}}%
\subfloat[$n=500$]{\includegraphics[width=4.5cm,height=4.5cm]{Pictures/HS_n=500.png}}%
\hspace{\stretch{1}}
\end{minipage}
\begin{minipage}{.32\textwidth}
\centering
\hspace*{\stretch{1}}%
\subfloat[$n=1000$]{\includegraphics[width=4.5cm,height=4.5cm]{Pictures/HS_n=1000.png}}%
\hspace{\stretch{1}}
\end{minipage}
\caption{Power of the $\Delta^{HS}_n$ test}
\label{fig:HS}
\end{figure}

Figure~\ref{fig:HS} shows that the $\Delta^{HS}_n$ tests also behaves as the previous ones. The high dimension improves the efficiency of the tests. We can also notice that the power of the tests increase rapidly around -3 on the logarithmic scale of the measure of separation.

\subsection{Effect of non null entries}

In the previous Section, we have plotted numerical simulations of the four tests presented in the paper. However we want to understand in more details the impact of the different choices that can be made in this procedures namely: the impact of the number of non null entries $s$, the impact of the location of non-null entries (close to the main diagonal or far from it).

In this sub-section we focus our study on the $\Delta_n^{MS+}$ test as we can extrapolate its behaviour to the other three tests. The underlying covariance matrix belongs to the class $\mathcal{F}_+(s, S, \sigma)$, for some $s \in \llbracket 1; S \rrbracket$.

First, we study the impact of the number of non null entries. For all the previous graphs $s$ was fixed and set to $(S-1)/{2}$. The objective is to observe how the value of $s$ impacts the behaviour of the test. For this purpose we plot side by side the $\Delta^{MS+}_n$ test with $s = S-1$ and $s = (S-1)/{2}$ for $n=100$ and different values of $p$ ($10, 20$ and $50$).

\begin{figure}[H]
\centering
\begin{minipage}{.49\textwidth}
\centering
\hspace*{\stretch{1}}%
\subfloat[$s=S-1$]{\includegraphics[width=4.5cm,height=4.5cm]{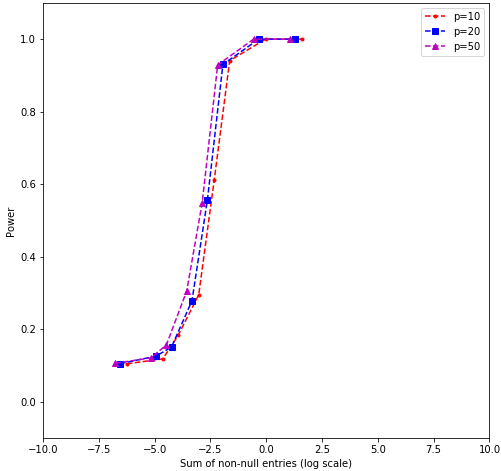}}%
\hspace{\stretch{1}}
\end{minipage}
\begin{minipage}{.49\textwidth}
\centering
\hspace*{\stretch{1}}%
\subfloat[$s=\dfrac{S-1}{2}$]{\includegraphics[width=4.5cm,height=4.5cm]{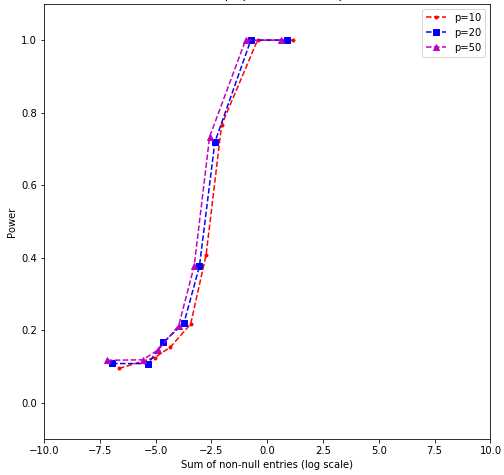}}%
\hspace{\stretch{1}}
\end{minipage}
\caption{Impact of the number of non null entries on $\Delta_n^{MS+}$}
\label{fig:MS_sparse}
\end{figure}

Figure ~\ref{fig:MS_sparse} shows that the number of non null entries has no major impact on the power of the test procedure $\Delta^{MS+}_n$.

\bigskip

Second, we look at the impact of the randomness in the location of the non null entries. In all previous graphs the non null entries were randomly located. The objective is to observe how the location of the non null entries impacts the behaviour of the test. To this end we plot the power function of $\Delta_n^{MS+}$ test with $s = ({S-1})/{2}$ for $n=100$ and different values of $p$. The non null entries are: (a) randomly located, (b) located next to the main diagonal. The plot (c) shows simultaneously the power functions of  $\Delta_n^{MS+}$ test for $p=10$ and $n=100$, but with non null entries randomly chosen i.e $\textbf{S} \subset \llbracket1;S \rrbracket$ with $|\textbf{S}| = s $ (red), fixed next to the main diagonal i.e $\textbf{S} = \llbracket1, s \rrbracket $ (blue) and fixed on the last values of the support i.e $\textbf{S} = \llbracket S-s; S \rrbracket$ (magenta).

\begin{figure}[H]
\centering
\begin{minipage}{.32\textwidth}
\centering
\hspace*{\stretch{1}}%
\subfloat[Randomly chosen]{\includegraphics[width=4.5cm,height=4.5cm]{Pictures/MS+_s1_rand1_log_1.png}}%
\hspace{\stretch{1}}
\end{minipage}
\begin{minipage}{.32\textwidth}
\centering
\hspace*{\stretch{1}}%
\subfloat[Next to the main diagonal]{\includegraphics[width=4.5cm,height=4.5cm]{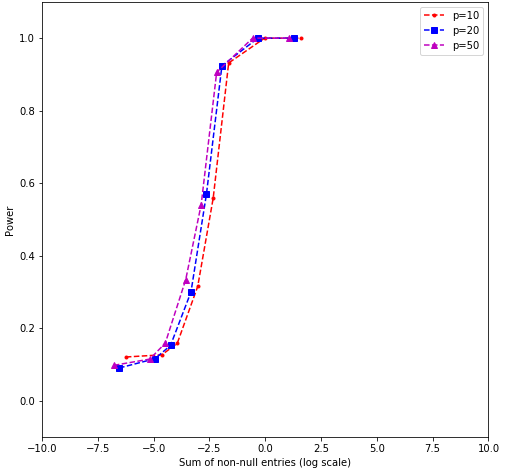}}%
\hspace{\stretch{1}}
\end{minipage}
\begin{minipage}{.32\textwidth}
\centering
\hspace*{\stretch{1}}%
\subfloat[On the same graph]{\includegraphics[width=4.5cm,height=4.5cm]{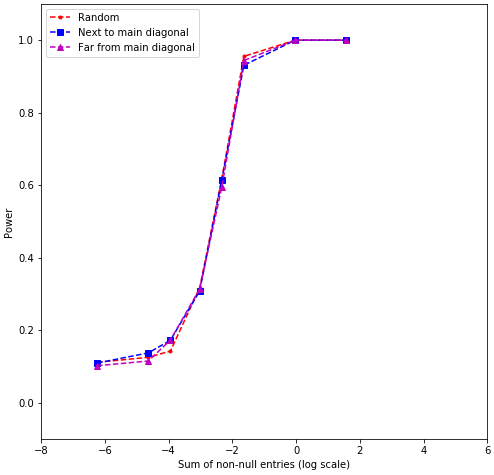}}%
\hspace{\stretch{1}}
\end{minipage}
\caption{Impact of the position of the non null entries on $\Delta_n^{MS+}$}
\label{fig:MS+_random}
\end{figure}

Figure ~\ref{fig:MS+_random} shows that the location of the non null entries has no impact on the $\Delta_n^{MS+}$ test performances.
In conclusion, the tests are sensitive neither to the number of non null entries nor to their location.

\subsection{Comparison between \texorpdfstring{$\Delta_n^{MS}$}{2} and \texorpdfstring{$\Delta_n^{HS}$}{2}}

The four test procedures $\Delta_n^{MS+}$, $\Delta_n^{MS}$, $\Delta_n^{HS+}$ and $\Delta_n^{HS}$ present very similar behaviour of their power curves. However, for high sparsity levels of the covariance matrix $\Delta_n^{HS+}$ and $\Delta_n^{HS}$ were designed to be more efficient than respectively $\Delta_n^{MS+}$ and $\Delta_n^{MS}$. The objective is to observe the difference in their behaviours under such high sparsity levels assumption. In this sub-section we illustrate our study on the two-sided $\Delta_n^{MS}$ and $\Delta_n^{HS}$ tests only, as they are analogous to their one-sided versions.

In order to observe the difference in the impact of sparsity on these two tests we plot their power curves by the number of non null entries $s$. The parameters are set as follows $n=100$, $p=100$ and $S=\sqrt{p}=10$. The plot is repeated for the non null entries common value to be $\sigma = {t_{n, p, \alpha}}/{100} \approx ~ 0.01473$ and $\sigma = {t_{n, p, \alpha}}/{50} \approx 0.02945$.
As the $\Delta_n^{HS}$ test requires a value for $s$ the true value is given in Figure~\ref{fig:MS_HS_known}.

\begin{figure}[H]
\centering
\begin{minipage}{.49\textwidth}
\centering
\hspace*{\stretch{1}}%
\subfloat[$\sigma = \frac{t_{n,p,\alpha}}{100}$]{\includegraphics[width=4.5cm,height=4.5cm]{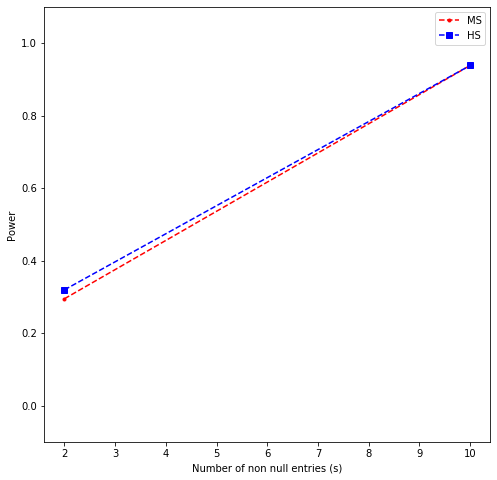}}%
\hspace{\stretch{1}}
\end{minipage}
\begin{minipage}{.49\textwidth}
\centering
\hspace*{\stretch{1}}%
\subfloat[$\sigma = \frac{t_{n,p,\alpha}}{50}$]{\includegraphics[width=4.5cm,height=4.5cm]{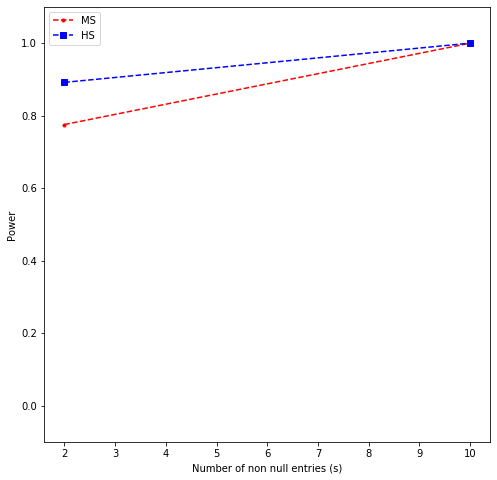}}%
\hspace{\stretch{1}}
\end{minipage}
\caption{$\Delta_n^{MS}$ vs $\Delta_n^{HS}$ with $s$ known}
\label{fig:MS_HS_known}
\end{figure}

Figure ~\ref{fig:MS_HS_known} shows that indeed the $\Delta_n^{HS}$ test procedure with known sparsity $s$ has better detection power than $\Delta_n^{MS}$ for higher sparsity, as it was expected. It can also be noticed that larger significant values of the non-null correlations improve even more the power $\Delta_n^{HS}$ over $\Delta_n^{MS}$.

We build now a new $\Delta_n^{HS}$ procedure that is free of knowledge of $s$ by aggregating several procedures  $\Delta_n^{HS}(s)$ for different values of $s$. Then we compare it to $\Delta_n^{MS}$. 
Consider a grid of plausible values of $s$ from 1 to $S$, build all $\Delta_n^{HS}(s) $ and decide according to
$$
\Delta_n^{HS} = \max_s \Delta_n^{HS},
$$
that is reject whenever at least one of the tests rejected and accept otherwise.

Let us confront the aggregated high-sparsity test and the moderate-sparsity test procedures. The two test procedures have been run in the same setup $n=100$, $p=100$ and $S = \sqrt{p} = 10$. The true values of $s$ are being set to $s=4$ and $s=7$, respectively. We plot the power curves of the two procedures by the measure of separation on a log-scale. The latter is rising because of growing values of $\sigma$.

In both cases, the grid of plausible sparsity levels has been fixed to two values: 2 and 10, which means that
\[
\Delta_n^{HS} = \max\{\Delta_n^{HS}(2),\Delta_n^{HS}(10)\}
\]
even though the true underlying sparsity value is not on the grid. This does not seem to be a drawback.

\begin{figure}[H]
\centering
\begin{minipage}{.49\textwidth}
\centering
\hspace*{\stretch{1}}%
\subfloat[$s=4$]{\includegraphics[width=4.5cm,height=4.5cm]{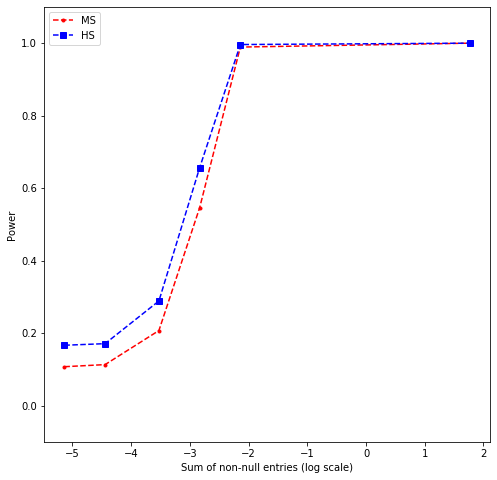}}%
\hspace{\stretch{1}}
\end{minipage}
\begin{minipage}{.49\textwidth}
\centering
\hspace*{\stretch{1}}%
\subfloat[$s=7$]{\includegraphics[width=4.5cm,height=4.5cm]{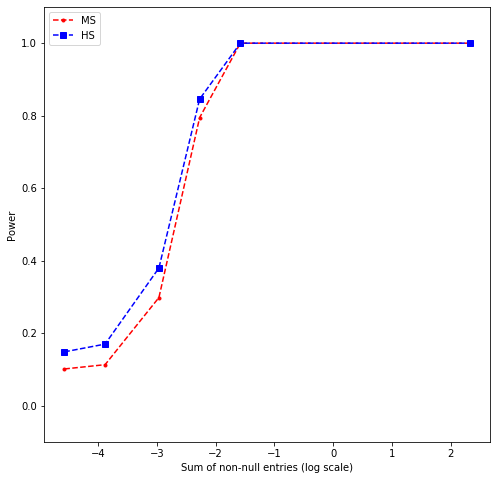}}%
\hspace{\stretch{1}}
\end{minipage}
\caption{$\Delta_n^{MS}$ vs $\Delta_n^{HS}$ with $s$ unknown}
\label{fig:MS_HS_unknown}
\end{figure}

In Figure ~\ref{fig:MS_HS_unknown} it appears that even with unknown value of  $s$ the $\Delta_n^{HS}$ test procedure performs better than $\Delta_n^{MS}$. It can be noticed that the curves show larger differences for lower values of the measure of separation.

In conclusion, the theoretical improvements of highly-sparse over moderately sparse procedures show up in the very extreme cases where the underlying signal is very close to white noise either because of very weak correlations or of very few non-null values.

\subsection{A high-dimensional \texorpdfstring{$MA$ series}{2}}
Let us construct a stationary process belonging to our set of sparse covariance matrices. Consider the stationary process $X_t$ defined by the following moving average ($MA$) model :
\[
X_t=\sum_{i=0}^{\lfloor\frac{p}{4}\rfloor} \phi^i \epsilon_{t-2i}
\]
with $\{\epsilon_t\}_{t \in \mathbb{N}}$ a Gaussian white noise and $|\phi|< 1$. The auto-covariance function of this series is
 \begin{align*}
 \text{Cov}(X_{t+h},X_t) =
 \left\{
 \begin{array}{ll}
 0, & \text{ if } h \text{ odd, or } h \geq \frac p4, \\
 \phi^{-\frac{h}{2}}\left(\frac{\phi^h-\phi^{2\left(\lfloor\frac{p}{4}+1\rfloor\right)}}{1-\phi^2}\right), & \text{ otherwise.}
 \end{array}
 \right.
 \end{align*}

In this example, the $p$-dimensional Gaussian vector $X=\left({X_t},...,{X_{t+p}} \right)$ has a covariance matrix belonging to the class $\mathcal{F}(s,S,\sigma)$ with $s\geq \frac{p}{4}-1$ tending to infinity with $p$, $S\leq \frac{p}{2}$ and \[
\sigma=\phi^{-\frac{1}{2}\lfloor\frac{p}{4}\rfloor}\left(\frac{\phi^{\lfloor\frac{p}{4}\rfloor}-\phi^{2\left(\lfloor\frac{p}{4}+1\rfloor\right)}}{1-\phi^2}\right).
\]

We plot the power of the $\Delta_n^{MS}$ test on the $y$-axis and the value of $\phi < 1$ on the $x$-axis.

\begin{figure}[H]
\centering
\begin{minipage}{.49\textwidth}
\centering
\hspace*{\stretch{1}}%
\subfloat[$n=500$]{\includegraphics[width=4.5cm,height=4.5cm]{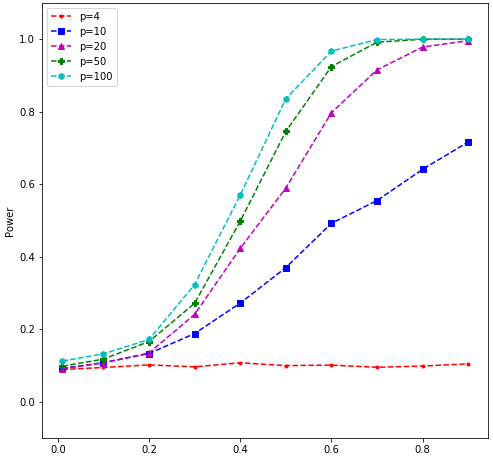}}%
\hspace{\stretch{1}}
\end{minipage}
\begin{minipage}{.49\textwidth}
\centering
\hspace*{\stretch{1}}%
\subfloat[$n=50$]{\includegraphics[width=4.5cm,height=4.5cm]{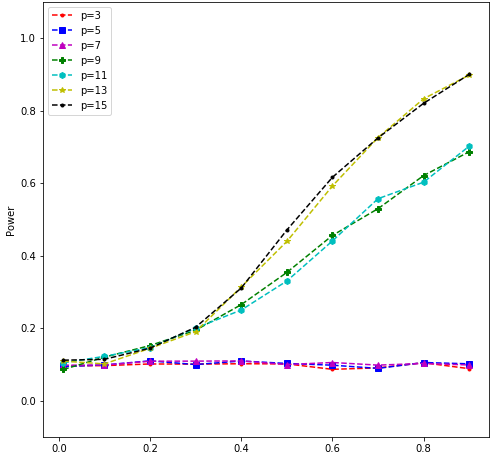}}%
\hspace{\stretch{1}}
\end{minipage}
\caption{Power of $\Delta_n^{MS}$ test for the $MA(\lfloor p/4 \rfloor)$}
\label{fig:example}
\end{figure}
Figure ~\ref{fig:example} shows the power of the $\Delta_n^{MS}$ test for this example for various values of $p$. It can be seen that the $\Delta_n^{MS}$ test performs better when the value of $p$ increases.
We point out that for $p<8$ the $MA(\lfloor p/4 \rfloor)$ is a white noise. It explains why the power of the $\Delta_n^{MS}$ test stays constantly low when $p<8$ .

\section{Proofs}

\subsection{Proof of Theorem \ref{Theorem_subexp}}
	The following lemma is useful to prove the theorem. We prove a more general statement involving an arbitrary constant $K$ in (0,1). It is sufficient to take $K=1/2$ to deduce the theorem.
\begin{lemma}\label{lemma_det}
	Let $\Sigma \in \mathcal{S}_p^{++}$ and $\Sigma^{1/2}$ be its square root. Let $A\in S_p$ and $M=\Sigma^{1/2}A\Sigma^{1/2}$. Then,
	for an arbitrary $K\in]0,1[$,  the matrix ${I_p}-tM$ is invertible and $$\det\left(\left({I_p}-tM\right)\right)^{-1}\leq\exp\left(t\mathrm{Tr}(A\Sigma)+\frac{t^2||A\Sigma||^{2}_F}{2(1-K)}\right), \mbox{ for all } |t|<\frac{K}{||A\Sigma||_{\infty}}.
$$
\end{lemma}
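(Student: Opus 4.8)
The plan is to reduce everything to the eigenvalues of $M$ and then apply a scalar logarithmic inequality coordinatewise. First I would observe that since $A$ and $\Sigma^{1/2}$ are symmetric, the matrix $M=\Sigma^{1/2}A\Sigma^{1/2}$ is symmetric, hence diagonalizable with real eigenvalues $\lambda_1,\dots,\lambda_p$. Conjugating by $\Sigma^{1/2}$ shows $\Sigma^{1/2}M\Sigma^{-1/2}=\Sigma A$, so $M$ is similar to $\Sigma A$ and therefore shares the spectrum of $A\Sigma$. In particular $\sum_i\lambda_i=\mathrm{Tr}(A\Sigma)$ and $\max_i|\lambda_i|\le\|A\Sigma\|_\infty$.

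Next I would handle invertibility and pass to a product. For $|t|<K/\|A\Sigma\|_\infty$ each $|t\lambda_i|\le|t|\,\|A\Sigma\|_\infty<K<1$, so $1-t\lambda_i>0$ and $I_p-tM$ is invertible with $\det(I_p-tM)=\prod_i(1-t\lambda_i)>0$. Taking logarithms, $\log\det(I_p-tM)^{-1}=-\sum_i\log(1-t\lambda_i)$.

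The elementary ingredient is the scalar bound $-\log(1-x)\le x+\frac{x^2}{2(1-K)}$, valid for $|x|\le K<1$, which I would prove from the series $-\log(1-x)=\sum_{k\ge1}x^k/k$ by estimating the tail $\sum_{k\ge2}x^k/k\le\frac12\sum_{k\ge2}|x|^k=\frac{|x|^2}{2(1-|x|)}\le\frac{x^2}{2(1-K)}$ (each term satisfies $x^k\le|x|^k$, so this holds for negative $x$ as well). Applying this with $x=t\lambda_i$ and summing gives $\log\det(I_p-tM)^{-1}\le t\sum_i\lambda_i+\frac{t^2}{2(1-K)}\sum_i\lambda_i^2=t\,\mathrm{Tr}(A\Sigma)+\frac{t^2}{2(1-K)}\,\mathrm{Tr}\big((A\Sigma)^2\big)$, using $\sum_i\lambda_i^2=\mathrm{Tr}(M^2)=\mathrm{Tr}((A\Sigma)^2)$. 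Exponentiating yields the claim once $\mathrm{Tr}((A\Sigma)^2)$ is replaced by $\|A\Sigma\|_F^2$.

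The hard part, and the only genuinely non-routine step, will be that last replacement: since $A\Sigma$ is not symmetric, $\mathrm{Tr}((A\Sigma)^2)\ne\|A\Sigma\|_F^2$ in general, so I must show $\mathrm{Tr}((A\Sigma)^2)\le\|A\Sigma\|_F^2$, i.e. $\mathrm{Tr}(A\Sigma A\Sigma)\le\mathrm{Tr}(A^2\Sigma^2)$. I would obtain this from the commutator identity: $[A,\Sigma]=A\Sigma-\Sigma A$ is antisymmetric, so $0\le\|[A,\Sigma]\|_F^2=-\mathrm{Tr}([A,\Sigma]^2)=2\mathrm{Tr}(A^2\Sigma^2)-2\mathrm{Tr}(A\Sigma A\Sigma)$, which is exactly the desired inequality. (Equivalently one may invoke $\mathrm{Tr}(B^2)\le\|B\|_F^2$ for any real matrix $B$, a consequence of Schur's bound $\sum_i|\mu_i|^2\le\sum_i s_i^2$ comparing eigenvalues to singular values.) Combining these four steps completes the proof.
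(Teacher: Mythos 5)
Your proof is correct, and its skeleton (diagonalize the symmetric matrix $M$, expand $-\log(1-t\lambda_i)$ as a power series, bound the tail by a geometric series, sum over eigenvalues) is the same as the paper's. Where you genuinely diverge — and improve on the paper — is in the final identification of $\sum_i\lambda_i^2$ with $\|A\Sigma\|_F^2$. The paper simply asserts $\|A\Sigma\|_F^2=\|M\|_F^2=\sum_k\lambda_k^2$, but this equality is false in general: since $M$ is symmetric, $\|M\|_F^2=\mathrm{Tr}(M^2)=\mathrm{Tr}\bigl((A\Sigma)^2\bigr)=\mathrm{Tr}(A\Sigma A\Sigma)$, whereas $\|A\Sigma\|_F^2=\mathrm{Tr}(A^2\Sigma^2)$, and $A\Sigma$ need not be symmetric. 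Only the inequality $\mathrm{Tr}(A\Sigma A\Sigma)\le\mathrm{Tr}(A^2\Sigma^2)$ holds, which is exactly what the lemma needs and exactly what your commutator identity $0\le\|[A,\Sigma]\|_F^2=2\mathrm{Tr}(A^2\Sigma^2)-2\mathrm{Tr}(A\Sigma A\Sigma)$ supplies; you have correctly identified and repaired a gap in the published argument. A second, smaller refinement: your scalar bound $-\log(1-x)\le x+\tfrac{x^2}{2(1-|x|)}$ keeps the absolute value in the denominator, which is necessary when $t\lambda_i<0$; the paper's intermediate bound $\tfrac{\lambda_k^2}{1-t\lambda_k}$ is not a valid upper bound term by term for negative $t\lambda_k$ (try $t\lambda_k=-1/2$), although its final conclusion with $1-K$ in the denominator is still correct. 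In short: same route, but your version is the one that actually closes.
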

\begin{proof}
	Let $\lambda_1,...,\lambda_p$ be the real eigenvalues of the symmetric matrix $M$ associated to the eigenvectors $x_1,...,x_p$. Then
	for an arbitrary $K\in]0,1[$, for all $|t|<\frac{K}{||A\Sigma||_{\infty}}$,  $1-t\lambda_1,...,1-t\lambda_p$ are the strictly positive eigenvalues of the matrix ${I_p}-tM$ associated to the eigenvectors $x_1,..,x_p$ We have
	\begin{align*}
	\det\left({I_p}-tM\right)^{-1}&=\exp\left(-\sum_{k=1}^{p}\log(1-t\lambda_k)\right)=
	\exp\left(\sum_{k=1}^{p}\sum_{i=1}^{\infty}\frac{1}{i}\left(t\lambda_k\right)^i\right)\\
	 &=\exp\left(t\mathrm{Tr}(A\Sigma)+\sum_{k=1}^{p}t^2\lambda_k^2\left(\sum_{i=0}^{\infty}\frac{t^i}{i+2}\lambda_k^i\right)\right)\\
	 \end{align*}
	 
	 \begin{align*}
	 \det\left({I_p}-tM\right)^{-1} 
	 &\leq\exp\left(t\mathrm{Tr}(A\Sigma)+\sum_{k=1}^{p}\frac{t^2\lambda_k^2}{2}\left(\sum_{i=0}^{\infty}t^i\lambda_k^i\right)\right)\\
	 &=\exp\left(t\mathrm{Tr}(A\Sigma)+\frac{t^2}{2}\sum_{k=1}^{p}\frac{\lambda_k^2}{1-t\lambda_k}\right) . \\
	\end{align*}
	By using the fact that $||A\Sigma||^{2}_F=||M||^{2}_F=\sum_{k=1}^{p}\lambda_k^2$ and that $||A\Sigma||_{\infty}=||M||_{\infty}=\max_k|\lambda_k|$, we have :
	$$	\det\left({I_p}-tM\right)^{-1}\leq \exp\left(t\mathrm{Tr}(A\Sigma)+\frac{t^2||A\Sigma||^{2}_F}{2(1-K)}\right)$$ which ends the proof.
\end{proof}

Let us note that if $X\sim\mathcal{N}(0_p,\Sigma)$,  then $Y=\Sigma^{-1/2}X\sim \mathcal{N}(0_p,{I_p})$.\\
For all $|t|<\frac{nK}{2||A\Sigma||_{\infty}}$, we have :
\begin{align*}	
\mathbb{E}\left[\exp\left(t\varphi_A\left(\Sigma_n-\Sigma\right)\right)\right]
&=\mathbb{E}\left[\exp\left(\frac{t}{n}\left(X^TAX\right) \right)\right]^n\exp(-t\mathrm{Tr}(A\Sigma))\\
&=\mathbb{E}\left[\exp\left(\frac{t}{n}\left(Y^T\Sigma^{T 1/2}A\Sigma^{1/2}Y\right) \right)\right]^n\exp(-t\mathrm{Tr}(A\Sigma))\\
&=\mathbb{E}\left[\exp\left(\frac{t}{n}\left(Y^TMY\right) \right)\right]^n\exp(-t\mathrm{Tr}(A\Sigma))=: T, \mbox{ say}.
\end{align*}
Now, we use the probability density of $Y$ and calculate explicitly
\begin{align*}
T& :=\exp(-t\mathrm{Tr}(A\Sigma))\left(\left(\frac{1}{2\pi}\right)^{p/2}\int ...\int \exp\left(\frac{t}{n}Y^TMY -\frac{1}{2}Y^TY\right)dy_1...dy_p\right)^n\\
&=\exp(-t\mathrm{Tr}(A\Sigma))\left(\left(\frac{1}{2\pi}\right)^{p/2}\int ...\int \exp\left( -\frac{1}{2}Y^T({I_p}-\frac{2t}{n})M)Y\right)dy_1...dy_p\right)^n\\
&=\exp(-t\mathrm{Tr}(A\Sigma))\left(\det\left({I_p}-t \frac{2}{n}M\right)\right)^{-n/2}\\
\end{align*}
By applying Lemma \ref{lemma_det}, we have
\begin{align*}
\mathbb{E}\left[\exp\left(t\varphi_A\left(\Sigma_n-\Sigma\right)\right)\right]
&\leq\exp(-t\mathrm{Tr}(A\Sigma))\exp\left(t\mathrm{Tr}(A\Sigma)+\frac{t^2||A\Sigma||^{2}_F}{n(1-K)}\right)\\&
=\exp\left(\frac{t^2||A\Sigma||^{2}_F}{n(1-K)}\right)=\exp\left(\frac{\nu^2t^2}{2}\right) \text{ with }\nu^2=\frac{2||A\Sigma||_F^2}{n(1-K)}.
\end{align*}

\subsection{Proof of Proposition \ref{prop_norm}}

1. To bound the operator norm of the matrix $A_W$, we use Gershgorin's circle theorem.  Let $M=(m_{i,j})_{1\leq i,j\leq p}$ be a $p\times p$ matrix.  Then, all eigenvalues of the matrix $M$ lie within at least one of the Gershgorin discs $D(m_{ii},\sum_{j\neq{i}} \left|m_{ij}\right|)$.

Gershgorin's circle theorem applied to the matrix $A_W$ gives us :
	\begin{align*}
	||A_W||_\infty=\max_k|\lambda_k| \in D\left(0,2\sum_{j\in W}\frac{1}{2(p-j)}\right) \Rightarrow ||A_W||_\infty\leq \frac{w}{p-S}
	\end{align*}
	To bound the squared Frobenius norm, we sum all the squared elements of $A_W$, which gives us :
	\begin{align*}
	||A_W||_F^2=2\sum_{j\in W}\frac{p-j}{4(p-j)^2}=\sum_{j\in W}\frac{1}{2(p-j)}\leq \frac{w}{2(p-S)}
	\end{align*}

2. To bound the operator norm of the matrix $A_W\Sigma$  for some $\Sigma $ in $\mathcal{F}(s,S, \sigma)$, we use Cauchy-Schwarz inequality together with Gershgorin's circle theorem :$$||A_W \Sigma||_\infty\leq||A_W||_\infty ||\Sigma||_\infty\leq \sigma_0 \frac{(2s+1)w}{p-S}$$ \\
	To bound the squared Frobenius norm of the matrix $A_W\Sigma$ we will use the following lemma.
	\vspace{0.8cm}
	\begin{lemma}\label{lemma_norm}
	Let $M$and $N$ be two $p\times p$ symmetric matrices. Then $ ||MN||_F^2=\mathrm{Tr}(M^2 N^2) $ and
$$||MN||_F^2\leq \max_{1\leq k \leq p}|\lambda_k|^2||N||^2_F=||M||_\infty^2||N||^2_F$$
	\end{lemma}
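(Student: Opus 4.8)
The plan is to prove the trace identity first and then deduce the norm bound from it. For the identity, I would begin from the definition of the Frobenius norm, $\|MN\|_F^2 = \mathrm{Tr}\bigl((MN)^T MN\bigr)$. Since both $M$ and $N$ are symmetric, $(MN)^T = N^T M^T = NM$, so that $\|MN\|_F^2 = \mathrm{Tr}(NM\,MN) = \mathrm{Tr}(N M^2 N)$. Invoking the cyclic invariance of the trace, $\mathrm{Tr}(N M^2 N) = \mathrm{Tr}(M^2 N\,N) = \mathrm{Tr}(M^2 N^2)$, which is exactly the first claim. This step is purely formal and uses only symmetry and the cyclicity of the trace.

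For the inequality, the key observation is that $M^2$ is symmetric positive semidefinite, with eigenvalues $\lambda_k^2 \geq 0$ and largest eigenvalue $\max_k \lambda_k^2 = \|M\|_\infty^2$. I would diagonalize $N$ as $N = U D U^T$ with $U$ orthogonal and $D = \mathrm{diag}(\mu_1,\dots,\mu_p)$, so that $N^2 = U D^2 U^T$. Using the identity just established together with cyclicity gives $\mathrm{Tr}(M^2 N^2) = \mathrm{Tr}\bigl(U^T M^2 U\, D^2\bigr) = \sum_{k=1}^p B_{kk}\,\mu_k^2$, where $B = U^T M^2 U$ is orthogonally similar to $M^2$, hence symmetric positive semidefinite with the same eigenvalues. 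Each diagonal entry then satisfies $0 \leq B_{kk} = e_k^T B e_k \leq \|B\|_\infty = \|M^2\|_\infty = \|M\|_\infty^2$, while $\sum_k \mu_k^2 = \mathrm{Tr}(N^2) = \|N\|_F^2$. Combining these two facts yields $\mathrm{Tr}(M^2 N^2) \leq \|M\|_\infty^2 \sum_k \mu_k^2 = \|M\|_\infty^2 \|N\|_F^2$, which completes the proof.

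An equivalent and even shorter route to the bound avoids the trace identity altogether: writing $N$ in terms of its columns $n_1,\dots,n_p$, the columns of $MN$ are $M n_i$, and $\|M n_i\|_2 \leq \|M\|_\infty \|n_i\|_2$, so summing squares gives $\|MN\|_F^2 = \sum_i \|M n_i\|_2^2 \leq \|M\|_\infty^2 \sum_i \|n_i\|_2^2 = \|M\|_\infty^2 \|N\|_F^2$. I would keep the diagonalization argument in the main text, however, since it reuses the trace identity $\|MN\|_F^2 = \mathrm{Tr}(M^2 N^2)$ that is stated in the lemma. There is no serious obstacle here: the only point requiring a moment of care is the bound $B_{kk} \leq \|M\|_\infty^2$ on the diagonal entries of a positive semidefinite matrix (equivalently, the operator-norm/Frobenius-norm submultiplicativity), which is where the symmetry of $M$ and the positivity of $M^2$ are genuinely used.
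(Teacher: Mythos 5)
Your proof is correct and follows essentially the same route as the paper's: both establish $\|MN\|_F^2=\mathrm{Tr}(M^2N^2)$ from symmetry and cyclicity of the trace, and both obtain the bound from $M^2\preceq \|M\|_\infty^2 I_p$ together with the positive semidefiniteness of $N^2$. The only difference is cosmetic: the paper cites the trace monotonicity $A\preceq B\Rightarrow \mathrm{Tr}(AC)\le\mathrm{Tr}(BC)$ for $C\succeq 0$ as a known fact, while you prove that step explicitly by diagonalizing $N$ and bounding the diagonal entries of $U^TM^2U$.
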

	\vspace{0.8cm}
\begin{proof}
We have $
||MN||_F^2=\mathrm{Tr}( MNN^TM^T)=\mathrm{Tr}( M^2N^2)$, with $M^2$ and $N^2$ symmetric and positive semi-definite matrices $(M^2\geq 0, \quad N^2\geq 0)$. \\
Recall that, if $A\leq B$ (in the sense that $B-A\geq 0$), then
$\mathrm{Tr}(AC)\leq\mathrm{Tr}(BC)$, for any $C\geq 0$.\\
Here, $M^2\leq \lambda_{max}(M^2){I_p}\leq \lambda_{max}^2(M){I_p}$ and this gives $$\mathrm{Tr}(M^2N^2)\leq \lambda_{max}^2(M)\mathrm{Tr}(N^2)$$
\end{proof}
If $w>1$, using Lemma \ref{lemma_norm} on $M=\Sigma$ and $N=A_W$, we have $$||A_W\Sigma||_F^2\leq||A_W||_F^2||\Sigma||_\infty^2\leq \sigma_0^2 \frac{w(2s+1)^2}{2(p-S)}$$
If $w=1$ and $W=\{j\}$, using Lemma \ref{lemma_norm} on $M^2=\Sigma$ and $N^2=\Sigma^{1/2}A_j^2\Sigma^{1/2}$, we have
$$
||A_j\Sigma||_F^2 = \mathrm{Tr}(A_j^2 \Sigma^2)\leq ||A_j\Sigma^{1/2}||_F^2||\Sigma^{1/2}||_\infty^2\leq \sigma_0 (2s+1)||A_j\Sigma^{1/2}||_F^2
$$
  It suffices to prove that $||A_j\Sigma^{1/2}||_F^2=\mathrm{Tr}(A_j^2\Sigma)\leq \sigma_0 \frac{\mathcal{K}}{(p-S)}$ so that we conclude the proof that $||A_j\Sigma||_F^2\leq \sigma_0^2  \frac{\mathcal{K}(2s+1)}{p-S}$.
 Let $B_j=A_j^2=(b_{k,l}^j)_{1\leq k,l\leq p}$. For every $1\leq k,l\leq p$, we have
 \begin{equation*}
 b_{k,l}^j=\sum_{i=1}^{p}a_{k,i}^ja_{i,l}^j=\sum_{i=1}^{p}a_{|k-i|}^ja_{|l-i|}^j=\sum_{i=1}^{p}\frac{\delta_{|k-i|=j}\delta_{|l-i|=j}}{4(p-j)^2}
 \end{equation*}
\begin{itemize}
\item If $k=l$, $b_{k,k}^j=\left\{\begin{array}{c}
\frac{1}{2(p-j)^2} \text{ if } j < \frac{p}{2} \quad \text{and } j<k\leq p-j \\
0\quad  \text{ if } j \geq \frac{p}{2} \quad \text{and}\quad p-j\leq k<j \\
\frac{1}{4(p-j)^2} \quad\text{ otherwise}\hspace{3cm}
\end{array}\right.$

 \item If $k\neq l$, for $\delta_{|k-i|=j}\delta_{|l-i|=j}$ to be non-null, we need :$$\left\{\begin{array}{c}
k-i=j\text{ and }l-i=-j \\
\text{or}\\
l-i=j\text{ and }k-i=-j\\
\end{array}\right.\Leftrightarrow \left\{\begin{array}{c}
k-l=2j\text{ and }i=\frac{k+l}{2} \\
\text{or}\\
l-k=2j\text{ and }i=\frac{k+l}{2}\end{array}\right.\\\Leftrightarrow \left\{\begin{array}{c}
|k-l|=2j \\
\text{and}\\
i=\frac{k+l}{2}\\
\end{array}\right.$$
Therefore, $
b_{k,l}^j=\left\{\begin{array}{c}
\frac{1}{4(p-j)^2} \text{ if } j<\frac{p}{2} \text{ and } |k-l|=2j \\
0 \text{ otherwise }\hspace{2.4cm}
\end{array}\right.$
\end{itemize}
\vspace{0.8cm}
Summing up the results gives us
\begin{align*}
||A_j\Sigma^{1/2}||_F^2&=\mathrm{Tr}(A_j^2\Sigma)=\sum_{m=1}^{p}\left(\sum_{i=1}^{p}b_{m,i}\sigma_{i,m}\right)\\
&\leq \sigma_0 \sum_{m=1}^{p}\left(\sum_{i=1}^{p}b_{m,i}\right)
=\sigma_0 \sum_{m=1}^{p}b_{m,m} + \sigma_0 \sum_{m\neq i} b_{m,i}\\
&\leq \sigma_0  \left\{\begin{array}{c}
\frac{2(p-j)+2(p-2j)}{4(p-j)^2} \quad \text{if}\quad j<\frac{p}{2}\\
\frac{2(p-j)}{4(p-j)^2} \quad \text{otherwise}
\end{array}\right.\\
&\leq \sigma_0  \left\{\begin{array}{c}
\frac{1}{(p-j)} \quad \text{if}\quad j<\frac{p}{2}\\
\frac{1}{2(p-j)} \quad \text{otherwise}
\end{array}\right.
\end{align*}
 This means that $$||A_j\Sigma||_F^2\leq \sigma_0 (2s+1)||A_j\Sigma^{1/2}||_F^2\leq \sigma_0^2 \frac{\mathcal{K}(2s+1)}{(p-S)}$$ \\
where $\mathcal{K}=\left\{\begin{array}{c}
1,\quad \text{if} \quad W \subseteq\{1,...,\frac{p}{2}-1\}\\
\frac{p}{2},\quad \text{if} \quad W \subseteq\{\frac{p}{2},...,p\}\hspace{0.5cm}	
\end{array}\right.$

	\subsection{Proof of Theorem \ref{Theorem_testMS}}
	We know from Corollary \ref{corollary_1} that the type $I$ error probability is such that
			\begin{equation*}
	\mathbb{P}_{I_p}\left[\varphi_{A_{1:S}}(\Sigma_n-{I_p})\geq t_{n,p}^{MS+}\right]\leq \exp\left(-\frac{u}{4}\right)
	\end{equation*} and that, for any $\Sigma$ in $\mathcal{F}_+(s, S, \sigma)$, we have
\begin{equation*}
	\mathbb{P}_{\mathrm{\Sigma}}[\varphi_{A_{1:S}}\left(\Sigma_n-\Sigma\right)\geq (1+2s)t_{n,p}^{MS+}]\leq \exp\left(-\frac{u}{4}\right),\quad\text{for all}\quad u>0.
\end{equation*}
We can bound the type $II$ error probability under the assumption that
$\sigma \geq\frac{2(s+1)}{s}t_{n,p}^{MS+}$ :
	\begin{align*}
	&\mathbb{P}_\mathrm{\Sigma}\left[\varphi_{A_{1:S}}(\Sigma_n-{I_p})\leq t_{n,p}^{MS+}\right]=\mathbb{P}_\mathrm{\Sigma}\left[\varphi_{A_{1:S}}(\Sigma_n-\mathrm{\Sigma})\leq t_{n,p}^{MS+}-\varphi_{A_{1:S}}(\mathrm{\Sigma})\right]\\
	&=\mathbb{P}_\mathrm{\Sigma}\left[\varphi_{A_{1:S}}(\mathrm{\Sigma}-\Sigma_n)\geq \varphi_{A_{1:S}}(\mathrm{\Sigma})-t_{n,p}^{MS+}\right]\\
	&\leq\mathbb{P}_\mathrm{\Sigma}\left[\varphi_{A_{1:S}}(\mathrm{\Sigma}-\Sigma_n)\geq s\sigma-t_{n,p}^{MS+}\right]\\
	&\leq\mathbb{P}_\mathrm{\Sigma}\left[\varphi_{A_{1:S}}(\mathrm{\Sigma}-\Sigma_n)\geq (2s+1)t_{n,p}^{MS+}\right]	 \leq \exp\left(-\frac{u}{4}\right),\quad\text{for all}\quad u>0.	
	\end{align*}
	Finally :$$R(\Delta_n^{MS+},\mathcal{F}^+)=\mathbb{P}_{I_p}(\varphi_{A_{1:S}}(\Sigma_n-{I_p})\geq t_{n,p}^{MS+})+\sup_{\Sigma \in \mathcal{F}^+}\mathbb{P}_\Sigma(\varphi_{A_{1:S}}(\Sigma_n-{I_p})\leq t_{n,p}^{MS+})\leq 2\exp\left(-\frac{u}{4}\right)$$

	\subsection{Proof of Theorem \ref{Theorem_testMS2}}
	Similarly to the proof of Theorem \ref{Theorem_testMS}, we use Corollary \ref{corollary_1} to bound the type $I$ error probability
	\begin{align*}
	&\mathbb{P}_{{I_p}}\left[\sum_{i=1}^{S}|\varphi_{A_i}(\Sigma_n-{I_p})|\geq t_{n,p}^{MS}\right]\leq \mathbb{P}_{I_p}\left[\bigcup_{i=1}^S\left\{|\varphi_{A_i}(\Sigma_n-{I_p})|\geq \frac{t_{n,p}^{MS}}{S}\right\}\right]\\
	&\leq\sum_{i=1}^{S} \mathbb{P}_{I_p}\left[|\varphi_{A_i}(\Sigma_n-{I_p})|\geq \frac{t_{n,p}^{MS}}{S}\right]\\
	&=\sum_{i=1}^{S} \mathbb{P}_{{I_p}}\left[|\varphi_{A_i}(\Sigma_n-{I_p})|\geq \max\left\{\sqrt{\frac{u}{2(1-K)}}\sqrt{\frac{4\log(S)}{n(p-S)}},\frac{u}{K}\frac{4\log(S)}{n(p-S)}\right\}\right]\\
	&\leq \sum_{i=1}^{S} 2\exp\left(-u\log S\right)\\
	&=2\exp\left(-(u-1)\log S\right)
	\end{align*}
	To bound the type $II$ error probability, we use the condition on $\sigma$ :
	\begin{align*}
	&\mathbb{P}_\mathrm{\Sigma}\left[\sum_{i=1}^{S}|\varphi_{A_i}(\Sigma_n-\mathrm{Id})|\leq t_{n,p}^{MS}\right]\leq \mathbb{P}_{\Sigma}\left[\bigcap_{i=1}^S\left\{|\varphi_{A_i}(\Sigma_n-\mathrm{Id})|\leq t_{n,p}^{MS}\right\}\right]\\
	&\leq\sup_{1\leq i \leq S}\mathbb{P}_\Sigma\left[|\varphi_{A_i}(\Sigma_n-{I_p})|\leq t_{n,p}^{MS}\right]\\
	&\leq\sup_{1\leq i \leq S}\mathbb{P}_\Sigma\left[|\varphi_{A_i}(\Sigma_n-\mathrm{\Sigma})|\geq|\varphi_{A_i}(\Sigma-{I_p})|-t_{n,p}^{MS}\right]\\
	&\leq\sup_{1\leq i \leq S}\mathbb{P}_\Sigma\left[|\varphi_{A_i}(\Sigma_n-\mathrm{\Sigma})|\geq\sigma-t_{n,p}^{MS}\right]\\
	&\leq\sup_{1\leq i \leq S}\mathbb{P}_\Sigma\left[|\varphi_{A_i}(\Sigma_n-\mathrm{\Sigma})|\geq\max\left\{\sqrt{\frac{u-1}{2(1-K)}}\sqrt{\frac{4\log S(2s+1)}{n(p-S)}},\frac{(u-1)}{K}\frac{4\log S(2s+1)}{n(p-c)}\right\}\right]\\
	&\leq 2\exp\left(-(u-1)\log S\right)
	\end{align*}
	This gives us finally : $$R(\Delta_n^{MS},\mathcal{F})\leq 4\exp\left(-(u-1)\log S\right)$$

\subsection{Proof of Theorem \ref{theorem_testHS}}
The type $I$ error probability is bounded by
\begin{align*}
\mathbb{P}_{\mathrm{{I_p}}}[\Delta_n^{HS+}=1]&\leq\sum_{\mathbf{S} \subseteq \{1,...,S\},\#\mathbf{S} = s}\mathbb{P}_{I_p}\left[ \varphi_{A_\mathbf{S}}(\Sigma_n-{I_p})\geq t_{n,p}^{HS+} \right]\\
&\leq \sum_{\mathbf{S} \subseteq \{1,...,S\},\#\mathbf{S} = s}\exp\left(-u\log{\binom{S}{s}}\right)\\
&=\exp\left(-(u-1)\log{\binom{S}{s}}\right)
\end{align*}
while the type $II$ error probability is bounded by
\begin{align*}
\mathbb{P}_{\mathrm{\Sigma}}[\Delta_n^{HS+}=0]&=\sup_{\Sigma \in \mathcal{F}^+(s,S,p,\sigma)}\mathbb{P}_\Sigma\left[\bigcap_{\mathbf{S} \subseteq \{1,...,S\},\#\mathbf{S} = s }\{|\varphi_{A_\mathbf{S}}(\Sigma_n-{I_p})|\leq t_{n,p}^{HS+} \}\right]\\
&\leq\sup_{\Sigma \in \mathcal{F}^+(s,S,p,\sigma)}\mathbb{P}_\Sigma\left[\varphi_{A_\mathbf{S}}(\Sigma_n-\mathrm{\Sigma})+\varphi_{A_\mathbf{S}}(\Sigma-{I_p})\leq t_{n,p}^{HS+}\right]\\
&=\sup_{\Sigma \in \mathcal{F}^+(s,S,p,\sigma)}\mathbb{P}_\Sigma\left[\varphi_{A_\mathbf{S}}(\mathrm{\Sigma}-\Sigma_n)\geq \varphi_{A_\mathbf{S}}(\Sigma)-t_{n,p}^{HS+}\right]\\
&\leq\sup_{\Sigma \in \mathcal{F}^+(s,S,p,\sigma)}\mathbb{P}_\Sigma\left[\varphi_{A_\mathbf{S}}(\mathrm{\Sigma}-\Sigma_n)\geq s\sigma-t_{n,p}^{HS+}\right]
\end{align*} for an arbitrary set $\mathbf{S}$ in $\{1,...,S\}$ containing s values. \\
Under the condition $s\sigma-t_{n,p}^{HS+} \geq (2s+1)\max\left\{\sqrt{\frac{u}{2(1-K)}}\sqrt{\frac{s}{n(p-S)}},\frac{u}{K}\frac{s}{n(p-S)}\right\}$ and Corollary \ref{corollary_1}, we have :
$$\mathbb{P}_{\mathrm{\Sigma}}[\Delta_n^{HS+}=0]\leq\sup_{\Sigma \in \mathcal{F}^+(s,S,p,\sigma)}\mathbb{P}_\Sigma\left[\varphi_{A_\mathbf{S}}(\mathrm{\Sigma}-\Sigma_n)\geq \tilde{t}\right]\leq \exp\left(-\frac{u}{4}\right)$$

	\subsection{Proof of Theorem \ref{theorem_testHSG}}
	The proof is similar to the proof of Theorem \ref{Theorem_testMS2}. The type $I$ probability error is bounded by
		\begin{align*}
		\mathbb{P}_{{I_p}}[\Delta_n^{HS}=1]&\leq\sum_{\mathbf{S} \subseteq \{1,...,S\},\#\mathbf{S} = s}\mathbb{P}_{I_p}\left[\sum_{i \in \mathbf{S}} |\varphi_{A_i}(\Sigma_n-{I_p})|\geq t_{n,p}^{HS} \right]\\
		&\leq \sum_{\mathbf{S} \subseteq \{1,...,S\},\#\mathbf{S} = s}\sum_{i \in \mathbf{S}}\mathbb{P}_{I_p}\left[ |\varphi_{A_i}(\Sigma_n-{I_p})|\geq \frac{t_{n,p}^{HS}}{s} \right]\\
		&\leq \sum_{\mathbf{S} \subseteq \{1,...,S\},\#\mathbf{S} = s}\sum_{i \in \mathbf{S}}2\exp\left[-u\log\left(s{\binom{S}{s}}\right)\right]\\
		&=2\exp\left[-(u-1)\log\left(s{\binom{S}{s}}\right)\right]
	\end{align*}
	The type $II$ probability is bounded by \begin{align*}
&\mathbb{P}_{\mathrm{\Sigma}}[\Delta_n^{HS}=0]=\mathbb{P}_\mathrm{\Sigma}\left[\max_{\mathbf{S} \subseteq \{1,...,S\},\#\mathbf{S} = s}\sum_{i\in \mathbf{S}}|\varphi_{A_i}(\Sigma_n-\mathrm{Id})|\leq t_{n,p}^{HS}\right]\\
&\leq \mathbb{P}_{\Sigma}\left[\bigcap_{\mathbf{S} \subseteq \{1,...,S\},\#\mathbf{S} = s}\bigcap_{i \in \mathbf{S}}\left\{|\varphi_{A_i}(\Sigma_n-\mathrm{Id})|\leq t_{n,p}^{HS}\right\}\right]\\
&\leq\sup_{\mathbf{S} \subseteq \{1,...,S\},\#\mathbf{S} = s}\sup_{i \in \mathbf{S}}\mathbb{P}_\Sigma\left[|\varphi_{A_i}(\Sigma_n-\mathrm{\Sigma})|\geq\sigma-t_{n,p}^{HS}\right]\\
&\leq 2\exp\left[-(u-1)\log\left(s{\binom{S}{s}}\right)\right]
\end{align*}

\vspace{-.1cm}
\subsection{Proof of Theorem \ref{theorem_select}}
Using Theorem \ref{Theorem_subexp} and Proposition \ref{prop_norm}, we have :
\begin{align*}
R^{LS}(\hat{\eta},\mathcal{F}^+)=&\sum_{j=1}^{S}\mathbb{E}_{\Sigma}[|\hat{\eta_{j}}-\eta_{j}|]
=\sum_{j \in \mathbf{S} }\mathbb{E}_{\Sigma}[|\hat{\eta_{j}}-\eta_{j}|] + \sum_{j \notin \mathbf{S}, j\leq S }\mathbb{E}_{\Sigma}[|\hat{\eta_{j}}-\eta_{j}|]\\
=&\sum_{j \in \mathbf{S} }\mathbb{E}_{\Sigma}[|\hat{\eta_{j}}-1|] + \sum_{j \notin \mathbf{S}, j\leq S }\mathbb{E}_{\Sigma}[|\hat{\eta_{j}}|]\\
=&\sum_{j \in \mathbf{S} }\mathbb{P}_{\Sigma}[|\varphi_{A_j}\left(\Sigma_n\right)|<\tau_n] + \sum_{j \notin \mathbf{S}, j\leq S }\mathbb{P}_{\Sigma}[|\varphi_{A_j}\left(\Sigma_n\right)|>\tau_n]\\
\leq&\sum_{j \in \mathbf{S} }\mathbb{P}_{\Sigma}[|\varphi_{A_j}\left(\Sigma_n-\Sigma\right)|>\varphi_{A_j}\left(\Sigma\right)-\tau_n] + \sum_{j \notin \mathbf{S}, j\leq S }\mathbb{P}_{\Sigma}[|\varphi_{A_j}\left(\Sigma_n-\Sigma\right)|>\tau_n]\\
\leq&\sum_{j \in \mathbf{S} }\mathbb{P}_{\Sigma}[|\varphi_{A_j}\left(\Sigma_n-\Sigma\right)|>\sigma-\tau_n] + \sum_{j \notin \mathbf{S}, j\leq S }\mathbb{P}_{\Sigma}[|\varphi_{A_j}\left(\Sigma_n-\Sigma\right)|>\tau_n]\\
\leq& \sum_{j \in \mathbf{S} }\mathbb{P}_{\Sigma}\left[|\varphi_{A_j}\left(\Sigma_n-\Sigma\right)|>\max\left\{\sqrt{2u\log(s)}\frac{||A_j\Sigma||_F}{\sqrt{n}},2u\log(s)\frac{||A_j\Sigma||_\infty}{n}\right\}\right] \\&+ \sum_{j \notin \mathbf{S}, j\leq S }\mathbb{P}_{\Sigma}\left[|\varphi_{A_j}\left(\Sigma_n-\Sigma\right)|>\max\left\{\sqrt{2u\log(S-s)}\frac{||A_j\Sigma||_F}{\sqrt{n}}, 2u\log(S-s)\frac{||A_j\Sigma||_\infty}{n}\right\}\right]\\
\leq& \sum_{j \in \mathbf{S} }2\exp\left(-\frac{u\log(s)}{4}\right) + \sum_{j \notin \mathbf{S}, j\leq S }2\exp\left(-\frac{u\log(S-s)}{4}\right)\\
\leq& 2\exp\left(-(u-1)\frac{\log(s)}{4}\right) + 2\exp\left(-(u-1)\frac{\log(S-s)}{4}\right)\\
\end{align*}


\bibliographystyle{plain}
\bibliography{references}

\end{document}